\documentclass{amsart}
\usepackage{amsmath}
\usepackage{graphics}
\usepackage{graphicx}
\usepackage{amsthm}
\usepackage{graphicx}
\usepackage{color}
\usepackage{amsfonts}
\usepackage{amssymb}
\usepackage{mathrsfs}
\usepackage{amscd}
\usepackage{mathtools}
\usepackage{multirow}
\usepackage{longtable}
\usepackage{url}
\usepackage[justification=centering]{caption}

\newcommand{\re}{\mathbb{R}}

\newcommand{\N}{\mathbb{N}}

\newcommand{\ttS}{\mathtt{S}}

\newcommand{\lmd}{\lambda}

\newcommand{\nn}{\nonumber}
\newcommand{\eps}{\epsilon}

\newcommand{\dt}{\delta}
\newcommand{\Dt}{\Delta}
\def\af{\alpha}

\def\rank{\mbox{rank}}

\newcommand{\Sig}{\Sigma}

\newcommand{\st}{\mbox{s.t.}}
\newcommand{\mt}[1]{\mathtt{#1}}

\newcommand{\reff}[1]{(\ref{#1})}

\newcommand{\mc}[1]{\mathcal{#1}}

\newcommand{\supp}[1]{\mbox{supp}(#1)}

\newcommand{\hm}{\mathit{hom}}

\newcommand{\ideal}[1]{\mathit{Ideal}[#1]}
\newcommand{\qmod}[1]{\mathit{QM}[#1]}

\newcommand{\HM}[1]{\mathcal{H}[#1]}

%%%%%%%%%%%%%%%%%%%%%%%%%%%%%%%%%%%%%%%%%%%%%%%%%%

\newcommand{\bdes}{\begin{description}}
\newcommand{\edes}{\end{description}}

\newcommand{\bal}{\begin{align}}
\newcommand{\eal}{\end{align}}

\newcommand{\bnum}{\begin{enumerate}}
\newcommand{\enum}{\end{enumerate}}

\newcommand{\bit}{\begin{itemize}}
\newcommand{\eit}{\end{itemize}}

\newcommand{\bea}{\begin{eqnarray}}
\newcommand{\eea}{\end{eqnarray}}
\newcommand{\be}{\begin{equation}}
\newcommand{\ee}{\end{equation}}

\newcommand{\baray}{\begin{array}}
\newcommand{\earay}{\end{array}}

\newcommand{\bsry}{\begin{subarray}}
\newcommand{\esry}{\end{subarray}}

\newcommand{\bca}{\begin{cases}}
\newcommand{\eca}{\end{cases}}

\newcommand{\bcen}{\begin{center}}
\newcommand{\ecen}{\end{center}}

\newcommand{\bbm}{\begin{bmatrix}}
\newcommand{\ebm}{\end{bmatrix}}

\newcommand{\bmx}{\begin{matrix}}
\newcommand{\emx}{\end{matrix}}

\newcommand{\bpm}{\begin{pmatrix}}
\newcommand{\epm}{\end{pmatrix}}

\newcommand{\btab}{\begin{tabular}}
\newcommand{\etab}{\end{tabular}}

\newcommand{\brn}{\bar{n}}
\newcommand{\brx}{\bar{x}}

\newtheorem{theorem}{Theorem}[section]

\newtheorem{lemma}[theorem]{Lemma}

\newtheorem{assp}[theorem]{Assumption}

\theoremstyle{definition}
\newtheorem{example}[theorem]{Example}

\newtheorem{alg}[theorem]{Algorithm}

\setcounter{equation}{0}
\setcounter{subsection}{0}
\numberwithin{equation}{section}

\theoremstyle{definition}

\begin{document}

\title{Dehomogenization for Completely Positive Tensors}

\author{Jiawang Nie}
\author{Xindong Tang}
\author{Zi Yang}
\author{Suhan Zhong}

\address{Jiawang Nie, Department of Mathematics,
University of California San Diego,
9500 Gilman Drive, La Jolla, CA, USA, 92093.}
\email{njw@math.ucsd.edu}

\address{Xindong Tang, Department of Applied Mathematics,
The Hong Kong Polytechnic University,
Hung Hom, Kowloon, Hong Kong.
}
\email{xindong.tang@polyu.edu.hk}

\address{Zi Yang, Department of Electrical \& Computer Engineering,
University of California, Santa Barbara, CA, USA, 93106.
}
\email{ziy@ucsb.edu}

\address{Suhan Zhong, Department of Mathematics,
Texas A\&M University, College Station, TX, USA, 77843.}
\email{suzhong@tamu.edu}

\subjclass[2020]{90C23,15A69,44A60,47A57}
	
\keywords{tensor, complete positivity, dehomogenization,
moment, semidefinite relaxation}

% 2020 MSC numbers are required.
\subjclass[2020]{90C23,15A69,44A60,90C22}
% Please provide a minimum of 5 keywords or phrases.
\keywords{tensor, complete positivity, dehomogenization, moment, semidefinite relaxation.}

%The abstract of your paper
\begin{abstract}
A real symmetric tensor is completely positive (CP)
if it is a sum of  symmetric tensor powers of nonnegative vectors.
We propose a dehomogenization approach for studying CP tensors.
This gives new Moment-SOS relaxations for CP tensors.
Detection for CP tensors and the linear conic optimization
with CP tensor cones can be solved
more efficiently by the dehomogenization approach.
\end{abstract}

\maketitle

\section{Introduction}

Let $\re^n$ be the space of all real $n$-dimensional vectors.
For an integer $d >0$, a tensor $\mc{A}$ of order $d$
over the vector space $\re^n$ is represented by
an array labelled such that
\[
\mc{A} \, = \, ( \mc{A}_{i_1 \ldots i_d}),
\quad  1 \leq i_1, \ldots , i_d  \leq n .
\]
The tensor $\mathcal{A}$ is {\it symmetric}
if all entries $\mathcal{A}_{i_1\ldots i_d}$ are invariant
for all permutations of the label $(i_1, \ldots , i_d)$.
Let $\ttS^d(\mathbb{R}^{n})$ denote the space of
all symmetric tensors of order $d$ over $\re^n$.
For a vector $v \in\mathbb{R}^n$,
$v^{\otimes d}$ denotes the rank-$1$ tensor such that
\[
(v^{\otimes d} )_{i_1,\ldots,i_d} \, = \, v_{i_1}\cdots v_{i_d}
\]
for all labels $i_1, \ldots, i_d$.
Every symmetric tensor is a sum of rank-1 symmetric tensors
\cite{Comon2008}. We refer to \cite{Land12,Lim13}
for introductions to tensors.

Denote by $\re_+^n$ the nonnegative orthant, i.e.,
the set of vectors whose entries are all nonnegative.
A symmetric tensor $\mathcal{A} \in \ttS^d(\re^{n})$ is said to be
{\it completely positive} (CP) if there exist
$v_1, \ldots, v_r \in \re^n_+$ such that
\begin{equation} \label{cdmn}
  \mathcal{A} = (v_1)^{\otimes d} + \cdots + (v_r)^{\otimes d}.
\end{equation}
The equation \reff{cdmn} is called a CP-decomposition, when it exists.
The smallest $r$ in \eqref{cdmn} is the CP-rank of $\mc{A}$,
for which we denote $\rank_{\rm cp}(\mc{A})$.
When \eqref{cdmn} does not exist, we just let
$\rank_{\rm cp}(\mc{A}) = + \infty$.
The cone of all CP tensors in $\mt{S}^d(\re^n)$
is denoted by $\mc{CP}_n^{\otimes d}$.
Generally, it is hard to check whether a tensor is completely positive or not.
The question is NP-hard even for the matrix case (see \cite{Dickinson11}).

Completely positive tensors are closely related to copositive (COP) tensors.
Each $\mc{B} \in \mt{S}^d(\re^n)$ is uniquely determined by
the degree-$d$ homogeneous polynomial
\be \label{df:A(x)}
\mc{B} (x) \, \coloneqq \, \sum_{1\le i_1, \ldots ,i_d \le n}
\mc{B}_{i_1 \ldots  i_d} x_{i_1} \cdots x_{i_d}
\ee
in the variable $x:=(x_1,\ldots, x_n)$.
If $\mc{B}(x) \geq 0$ for every $x \in \re^n$, then
$\mc{B}$ is said to be nonnegative or positive semidefinite.
If $\mc{B}(x) \geq 0$ for every $x \in \re_+^n$, then
$\mc{B}$ is said to be {\it copositive}.
Moreover, if $\mc{B}(x) > 0$ for all $0 \ne x \in \re_+^n$, then
$\mc{B}$ is said to be strictly copositive.
Denote by $\mc{COP}_n^{\otimes d}$ the cone of
all copositive tensors in $\mt{S}^d(\re^n)$.
Memberships for the cone $\mc{COP}_n^{\otimes d}$
can be detected by tight relaxations \cite{Nie19tight,Nie18cop}.

For two tensors $\mc{A}, \mc{B} \in \mt{S}^d(\re^n)$,
their Hilbert-Schmidt inner product is
\be \label{inn:<A,B>}
\langle \mc{A} , \mc{B} \rangle  \, \coloneqq \,
 \sum_{1\le i_1, \ldots ,i_d \le n}
\mc{A}_{i_1 \ldots i_d} \mc{B}_{i_1 \ldots i_d}.
\ee
This induces the {\it Hilbert-Schmidt norm}
\be \label{HSnm:||A||}
 \| \mc{A} \| \, := \, \sqrt{   \langle \mc{A} , \mc{A} \rangle    }.
\ee
If $\mc{A}$ has the CP decomposition as in \reff{cdmn}, then
\[
\langle \mc{A} , \mc{B} \rangle  \, = \,
\mc{B}(v_1) + \cdots + \mc{B}(v_r).
\]
Therefore, if in addition $\mc{B}$ is copositive,
then $\langle \mc{A} , \mc{B} \rangle \ge 0$.
This fact implies the dual relationship
(the superscript $^*$ denotes the dual cone):
\[
\big( \mc{COP}_n^{\otimes d} \big)^* \,=\, \mc{CP}_n^{\otimes d},\quad
\big( \mc{CP}_n^{\otimes d}\big)^* \,=\, \mc{COP}_n^{\otimes d}.
\]

Completely positive tensors are extensions of
completely positive matrices \cite{BermanN,ZhouFan14}.
They have wide applications in
exploratory multi-way data analysis and blind source separation~\cite{Cichocki},
computer vision and statistics~\cite{Shashua}, multi-hypergraphs~\cite{XLQC16},
matrix and tensor completions \cite{ZhouFan14,ZhouFan18}.
We refer to \cite{Fan17sdpCP,LuoQi16,QiXu14NonTenFac}
for recent work on completely positive tensors.

Copositive and CP tensors are important in optimization.
The detection of copositive tensor cones is
a basic optimization problem. We refer to \cite{Nie18cop, Qi13symcop} for recent work.
For CP tensor cones, the memberships are studied in \cite{Fan17sdpCP,QiXu14NonTenFac}.
The CP tensors can be used to construct many interesting problems.
Recent work for CP matrix approximation problem is given in
\cite{FanZhou16CPmat,SpnDur14CPmat}.
Copositive tensors and CP tensors also have various applications.
They are used to formulate interesting models in game theory \cite{Klerk02stanum}
and dynamic systems \cite{Mason07}.
Surveys about copositive and CP optimization can be found
in \cite{Bomzesurvey12, Dursurvey10}.

In this paper, we introduce the dehomogenization approach for
studying completely positive tensors.
This can save computational expenses quite a lot.
In Section~\ref{sec:prelim}, we review some basics for tensor and optimization.
In Section~\ref{sc:RelandMem}, we use the dehomogenization to construct
semidefinite relaxations for CP tensor cones
and apply them to check memberships.
In Section~\ref{sc:LinOptwithCP}, we show how to use the dehomogenization
to solve linear conic optimization with CP cones.
The numerical experiments are given in Section~\ref{sc:ne}.

\section{Preliminaries}
\label{sec:prelim}

\subsection*{Notation}

Let $\re$ (resp.,$\re_+$, $\N$) denote the set of real
(resp., nonnegative real, nonnegative integer) numbers.
The $\re^n$ (resp., $\re_+^n$, $\N^n$) denotes the set of $n$-dimensional vectors
with entries in $\re$ (resp., $\re_+$, $\N$).
For $t\in \re$, $\lceil t\rceil$ is the smallest integer
that is not smaller than $t$.
For an integer $n > 0$, let $[n]:=\{ 1, \ldots, n\}$.
For $x\in\re^n$, $\|x\|$ denotes its Euclidean norm,
the $\delta_x$ denotes the unit mass Dirac measure supported at $x$.
The $e := (1,\ldots,1)$ is the vector of all ones
(its length should be clear in the context), and $e_i$ is the vector
of all zeros except its $i$th entry equal to $1$.
For a vector space $V$, denote by $V^*$ the dual space of $V$,
which is the set of all linear functionals acting on $V$.
For a set $K\subseteq V$, its dual cone is
\[
K^*  \, \coloneqq \,  \{\ell \in V^*: \,
\ell (u)\ge 0\, \forall \, u \in K \}.
\]
A matrix $A\in\re^{n\times n}$ is said to be positive semidefinite (psd)
if $x^TAx\ge 0$ for every $x\in \re^n$.
The inequality $A\succeq 0$ means that $A$ is psd.
The superscript $^T$ denotes the transpose of a vector or matrix.

Let $x=(x_1,\ldots,x_n)$. The $\re[x]$ denotes the ring of polynomials in $x$
with real coefficients. The symbol $\re[x]^{\hm}$
stands for the set of homogeneous polynomials in $\re[x]$.
A homogeneous polynomial is also called a form.
For a degree $d>0$, $\re[x]_d$
denotes the subset of polynomials in $\re[x]$ with degrees up to $d$,
while $\re[x]_d^{\hm}$ stands for the subset of
homogeneous polynomials in $\re[x]$ with degrees equal to $d$.
For a power $\alpha  \coloneqq (\alpha_1,\ldots,\alpha_n)\in \N^n$,
denote the monomial
\[
x^{\alpha} \, \coloneqq \, x_1^{\alpha_1}\cdots x_n^{\alpha_n} .
\]
Define $|\alpha|  \coloneqq  \alpha_1+\cdots+\alpha_n$.
For a degree $d$, denote the power sets
\be
\label{df:Nnd}
\N_d^n = \{\alpha\in \N^n: |\alpha|\le d\}, \quad
\overline{\N}_d^n = \{\alpha\in\N^n: |\alpha|=d\}.
\ee
The vector of all monomials with degrees up to $d$ is denoted as
\[
[x]_d \, \coloneqq \,
\bbm 1 & x_1 & \ldots & x_n & (x_1)^2 & x_1x_2  & \ldots & (x_n)^d \ebm^T.
\]
The subvector of $[x]_d$ with monomials of degree equal to $d$ is denoted as
\be
\label{df:xd_hm}
[x]_d^{\hm} \, \coloneqq \,
(x^{\alpha})_{ \alpha\in \overline{\N}_d^n }.
\ee

\subsection{Copositive forms and CP moments}
\label{ssc:copcp}

Let $\re^{ \N^n_d }$ denote the space of all real vectors
that are labeled by $\alpha\in \N^n_d$.
Each vector in $\re^{ \N^n_d }$ is called a truncated multi-sequences (tms).
Similarly, real vectors labeled by $\alpha\in \overline{\N}^n_d$
are called homogeneous truncated multi-sequence (htms).
The set of all such htms is denoted as $\re^{ \overline{\N}^n_d }$.
A homogeneous truncated multi-sequence
$y \in \re^{ \overline{\N}^n_d }$ is said to be completely positive (CP)
if $y  =  [u_1]_d^\hm + \cdots + [u_r]_d^\hm$
for some nonnegative vectors $u_1, \ldots, u_r \in \re_+^n$.
The set of all CP htms in $\re^{ \overline{\N}^n_d }$
is denoted as $\mc{CP}_{n,d}$. It is a closed convex cone
and can be equivalently written as
\be \label{cone:CP:nd}
\mc{CP}_{n,d} \, = \, \Big\{ \sum\limits_{i=1}^r \lambda_i[u_i]_d^\hm : \,
\lambda_i\ge 0, u_i\in \Delta, r \in \N \Big\} ,
\ee
where $\Dt$ is the simplex
\[
\Delta \, \coloneqq  \, \{ x\in\re^n: e^Tx=1,\, x\ge 0 \}.
\]
A homogeneous polynomial $f$ is said to be copositive if
$f(x)\ge 0$ for every $x\in \re_+^n$.
The cone of copositive forms in $\re[x]_d^\hm$ is denoted as
\be \label{cone:COP:nd}
\mc{COP}_{n,d} \, \coloneqq  \, \{ f \in \re[x]_d^\hm:\, f(x)\ge 0\,(x\in\re_+^n) \}.
\ee
The CP moment cone and COP polynomial cone are dual to each other:
\be
\label{eq:CPCOPdual}
(\mc{CP}_{n,d})^* = \mc{COP}_{n,d},\quad (\mc{COP}_{n,d})^* = \mc{CP}_{n,d}.
\ee

\subsection{Polynomial optimization}
\label{ssc:PO}

Let $\bar{n} = n-1$ and $\bar{x} \coloneqq (x_1,\ldots, x_{n-1})$.
A polynomial $p\in\re[\bar{x}]$ is said to be a sum-of-squares (SOS) if
\[
p = p_1^2+\cdots+p_m^2\quad\mbox{for some}\quad
p_1,\ldots,p_m\in\re[\bar{x}].
\]
The cone of SOS polynomials in $\bar{x}$ is denoted as $\Sigma[\bar{x}]$.
For a degree $d$, denote the truncation
\[
\Sigma[\bar{x}]_d   \coloneqq   \Sigma[\bar{x}]  \cap  \re[\bar{x}]_d .
\]
For a polynomial tuple $g=(g_1,\ldots,g_{m})$, denote the ideal
$\ideal{g} \coloneqq g_1\cdot\re[\bar{x}]+\cdots +g_m\cdot \re[\bar{x}]$
and the quadratic module
$
\qmod{g}  \coloneqq  \Sigma[x] +  g_1 \cdot \Sigma[x] +
{\cdots} + g_{m} \cdot  \Sigma[x]
$.
For a positive integer $k$, the degree-$2k$ truncation of $\ideal{g}$ is denoted as
\[
\ideal{g}_{2k} \coloneqq g_1\cdot\re[\bar{x}]_{2k-\deg(g_1)}+\cdots +g_m\cdot \re[\bar{x}]_{2k-\deg(g_m)}.
\]
Similarly, denote the degree-$2k$ truncation of $\qmod{g}$ as
\[
\qmod{g}_{2k} \,  \coloneqq  \, \Sigma[\bar{x}]_{2k} + g_1\cdot \Sigma[\bar{x}]_{2k-\deg(g_1)}
+{\cdots}+g_{m}\cdot\Sigma[\bar{x}]_{2k-\deg(g_{m})}.
\]
Consider the simplicial set
\[
\overline{ \Dt } = \{  \bar{x} \in \re_+^{n-1}:
x_1 + \cdots + x_{n-1} \le 1 ,\, \Vert \bar{x}\Vert^2 \le 1\}.
\]
In the above, we add a redundant ball constraint $1-\|\bar{x}\|^2\ge 0$ to get tighter relaxations.
Denote by $\mathscr{P}_d( \overline{ \Dt } )$ the cone of polynomials in
$\re[\bar{x}]_d$ that are nonnegative on $\overline{ \Dt }$.
Denote the moment cone
\be
\label{eq:tmscone}
\mathscr{R}_d( \overline{ \Dt } )  \coloneqq  \Big\{
\sum_{i=1}^r \lmd_i [v_i]_{d}:\,  v_i \in \overline{ \Dt },
\lmd_i \ge 0, r \in \N \Big\}.
\ee
The moment cone $\mathscr{R}_d(\overline{ \Dt })$ is closed convex and
\[
(\mathscr{P}_d(\overline{ \Dt }))^* \, = \,  \mathscr{R}_d(\overline{ \Dt }) .
\]

Denote the quadratic module for $\overline{\Dt}$:
\[
\baray{rcr}
\qmod{ \overline{\Dt} }  &  \coloneqq  & \Sig[\bar{x}] +
x_1\cdot \Sig[\bar{x}] + \cdots + x_{n-1} \cdot  \Sig[\bar{x}]  + \\
& & (1-e^T\bar{x}) \Sig[\bar{x}] + (1- \|\bar{x}\|^2) \Sig[\bar{x}].
\earay
\]
Given an even degree $2k >0$, denote the truncation
\[
\baray{rcr}
\qmod{ \overline{\Dt} }_{2k}  & \coloneqq  & \Sig[\bar{x}]_{2k} +
x_1\cdot \Sig[\bar{x}]_{2k-2} + \cdots + x_{n-1} \cdot  \Sig[\bar{x}]_{2k-2}  + \\
& & (1-e^T\bar{x}) \Sig[\bar{x}]_{2k-2} + (1- \|\bar{x}\|^2) \Sig[\bar{x}]_{2k-2}.
\earay
\]
The set $\qmod{ \overline{\Dt} }$ is archimedean,
since it contains the polynomial $1 - \| \bar{x} \|^2$
and $\overline{\Dt}$ is contained in the unit ball.
If a polynomial $p > 0$ on $\overline{\Dt}$, then $p \in \qmod{\overline{\Dt}}$.
Such a conclusion is often referenced as
Putinar's Positivstellensatz \cite{putinar1993positive}.

The dual cones of quadratic modules can be described by localizing matrices.
A tms $z\in \re^{\N_{2k}^{\bar{n}} }$ acts on the polynomial space
$\re[\bar{x}]_{2k}$ as a linear functional such that
\[
\langle \sum_{ \af \in \N^{\bar{n}}_{2k} } p_\af \bar{x}^\af , z \rangle
\, \coloneqq \,
\sum_{ \af \in \N^{\bar{n}}_{2k} } p_\af z_\af  .
\]
For a polynomial $q \in \re[\bar{x}]_{2k}$,
the $k$th order {\it localizing matrix} of $q$ and $z$
is the symmetric matrix $L_{q}^{(k)}[z]$ such that
(the $vec(a)$ denotes the coefficient vector of $a$)
\be \label{df:Lf[y]}
\langle qa^2, y \rangle  \, =  \,
vec(a)^T \big( L_{q}^{(k)}[z]  \big) vec(a)
\ee
for all $a \in \re[x]$ with $\deg(q a^2) \le 2k$.
When $q=1$ (the constant one polynomial),
the localizing matrix $L_{q}^{(k)}[z]$ becomes
the $k$th order moment matrix:
\[
M_k[z]  \coloneqq   L_{1}^{(k)}[z] .
\]
For instance, when $n=3$ and $k=2$, we have $\bar{n}=2$ and
\[
 M_2[z] = \left[
\begin{array}{cccccc}
z_{00} & z_{10} & z_{01} & z_{20} & z_{11} & z_{02} \\
z_{10} & z_{20} & z_{11} & z_{30} & z_{21} & z_{12} \\
z_{01} & z_{11} & z_{02} & z_{21} & z_{12} & z_{03} \\
z_{20} & z_{30} & z_{21} & z_{40} & z_{31} & z_{22} \\
z_{11} & z_{21} & z_{12} & z_{31} & z_{22} & z_{13} \\
z_{02} & z_{12} & z_{03} & z_{22} & z_{13} & z_{04} \\
\end{array} \right],
\]
\[
L_{x_1}^{(2)}[z] = \left[
\begin{array}{cccccc}
z_{10} & z_{20} & z_{11}  \\
z_{20} & z_{30} & z_{21}  \\
z_{11} & z_{21} & z_{12}  \\
\end{array} \right], \quad
L_{x_2}^{(2)}[z] = \left[
\begin{array}{cccccc}
z_{01} & z_{11} & z_{02}   \\
z_{11} & z_{21} & z_{12}   \\
z_{02} & z_{12} & z_{03}   \\
\end{array} \right] ,
\]
\[
L_{1-e^T\bar{x}}^{(2)}[z] = \left[
\begin{array}{cccccc}
z_{00}-z_{10}-z_{01} & z_{10}-z_{20}-z_{11} & z_{01}-z_{11}-z_{02} \\
z_{10}-z_{20}-z_{11} & z_{20}-z_{30}-z_{21} & z_{11}-z_{21}-z_{12} \\
z_{01}-z_{11}-z_{02} & z_{11}-z_{21}-z_{12} & z_{02}-z_{12}-z_{03} \\
\end{array} \right],
\]
\[
L_{1-\| \bar{x}\|^2 }^{(2)}[z] = \left[
\begin{array}{cccccc}
z_{00}-z_{20}-z_{02} & z_{10}-z_{30}-z_{12} & z_{01}-z_{21}-z_{03} \\
z_{10}-z_{30}-z_{12} & z_{20}-z_{40}-z_{22} & z_{11}-z_{31}-z_{13} \\
z_{01}-z_{21}-z_{03} & z_{11}-z_{31}-z_{13} & z_{02}-z_{22}-z_{04} \\
\end{array} \right].
\]
For an even degree $2k$, define the cone
\begin{equation}
\label{def:Sg_2d}
\mathscr{S}[\overline{\Dt}]_{2k} \,  \coloneqq
\left\{ z \in \re^{ \N_{2k}^{\bar{n}} }
\left| \baray{l}
M_k[z]\succeq 0, \, L_{1-e^T\bar{x}}^{(k)}[z] \succeq 0 , \\
L_{x_i}^{(k)}[z]\succeq 0, \, i = 1, \ldots, n-1, \\
L_{1-\| \bar{x}\|^2 }^{(k)}[z]  \succeq 0
\earay \right.
\right\}.
\end{equation}
It can be verified that (see \cite{JNieLinearOptMoment})
\begin{equation}
\label{eq:dualQMnS}
(\qmod{\overline{\Dt}}_{2k})^* \, = \,
\mathscr{S}[\overline{\Dt}]_{2k} .
\end{equation}
We refer to \cite{MSOS20,Las01,Las15,Lau09} for more detailed introductions
to polynomial and moment optimization.
Moment-SOS relaxations are quite useful for solving
matrix and tensor optimization problems.
We refer to \cite{FNZ18,FNZ19,HilNie08,Nie11,NieZhang18,Nie18cop}
for the related work.

\section{Relaxations and memberships}
\label{sc:RelandMem}

In this section, we use the dehomogenization approach to construct
semidefinite relaxations for completely positive tensor cones.
They can be used to check memberships.

Recall the power set $\overline{\N}_d^n$ as in \reff{df:Nnd}.
For each symmetric tensor $\mc{A}\in\ttS^d(\re^n)$,
there exists a unique homogeneous truncated multi-sequence (htms)
$y=(y_{\alpha}) \in \re^{ \overline{\N}_d^n }$ such that
\[
y_{\alpha} = \mc{A}_{i_1,\ldots,i_d}
\]
for every $x^{\alpha} = x_{i_1}\cdots x_{i_d}$
with $\af \in \overline{\N}_d^n$.
This induces the linear map
$\phi:\ttS^d(\re^n) \to \re^{ \overline{\N}_d^n }$ such that
\be
\label{symt2mom:varho}
\phi\Big( \sum_{i=1}^k\lambda_i(u_i)^{\otimes d} \Big) \,=\,
\sum_{i=1}^k \lambda_i [u_i]_d^{\hm},
\ee
for all $u_1,\ldots,u_k \in\re^n$.
The map $\phi$ gives an isomorphism between the symmetric tensor space
$\ttS^d(\re^n)$ and the htms space $\re^{\overline{\N}_d^n}$.
It holds that
\[
\phi(\mc{CP}_n^{\otimes d}) \,=\, \mc{CP}_{n,d},\quad
\phi^{-1}(\mc{CP}_{n,d}) \,=\, \mc{CP}_n^{\otimes d}.
\]
The dual relationship \reff{eq:CPCOPdual} implies that
\[
(\mc{COP}_n^{\otimes d})^* = \mc{CP}_n^{\otimes d},\quad
(\mc{CP}_n^{\otimes d})^* = \mc{COP}_n^{\otimes d}.
\]

\subsection{Dehomogenization for CP moments}
\label{cop:ssc:aprox:dehom}

Let $x=(x_1,\ldots,x_n)$.
Recall that a htms $y\in \re^{\overline{\N}_d^n}$
is completely positive if
\[
y = \lambda_1[u_1]_d^{\hm}+\cdots+\lambda_r[u_r]_d^{\hm}
\]
for scalars $\lambda_1,\ldots,\lambda_r\ge 0$ and points $u_1,\ldots,u_r\in\Dt$, where
\[
\Dt = \{x \in \re^n: x\ge 0, x_1 + \cdots + x_n = 1 \} .
\]
Note the $x_n = 1- (x_1 + \cdots + x_{n-1} )$. Denote
\be \label{hatx:hatn}
\bar{x} \coloneqq  (x_1, \ldots, x_{\bar{n}}), \quad \bar{n} = n-1  .
\ee
This gives the simplicial set
\be \label{simlexbody:Dtarrow}
\overline{\Dt} \, \coloneqq \, \{ \bar{x} \in \re^{\bar{n}} :
\bar{x} \geq 0,
1-e^T\bar{x} \geq 0 \}.
\ee
Here, $e$ is the vector of all ones.
Clearly, we have $x \in \Dt$ if and only if $ \bar{x} \in \overline{\Dt}$.
Define the dehomogenization map
$\varpi: \re[x]_d^{\hm}\rightarrow \re[\bar{x}]_d$ such that
\be \label{diamond:x:f}
\varpi(f) \, \coloneqq \,  f(\bar{x}, 1- e^T\bar{x} ).
\ee
The map $\varpi$ is linear and it gives an isomorphism from
$\re[x]_d^\hm$ to $\re[\bar{x}]_d$. In particular,
the inverse of $\varpi$ is given such that
\[
\varpi^{-1} \Big( \sum_{\alpha\in\N_d^{\bar{n}}}
f_{\alpha}\bar{x}^{\alpha} \Big) \, =  \,
\sum_{\alpha\in \N_d^{\bar{n}}}
f_{\alpha}\bar{x}^{\alpha}(e^Tx)^{d-|\alpha|} .
\]
The adjoint map $\varpi^T$ gives an isomorphism from
$(\re[\bar{x}]_d)^*$ to $(\re[x]_d^{\hm})^*$,
and the inverse adjoint map $\varpi^{-T}$ gives an isomorphism from
$\re^{\overline{\N}_d^n}$ to $\re^{\N_d^{\brn}}$.
For each $y = (y_\af)  \in \re^{ \overline{\N}^n_d }$ and
for each $f \in \re[x]_d^\hm$, it holds that
\be \label{<f,y>=<omg(f),omg-T(y)>}
\langle f, y \rangle  \,= \, \langle \varpi^{-1}(\varpi(f)), y \rangle \,=\,
\langle  \varpi(f),  \varpi^{-T}(y) \rangle .
\ee
Note that a form $f \in \re[x]_d^{\hm}$ is copositive if and only if
$\varpi(f) \in \mathscr{P}_d(\overline{\Dt})$.

The dehomogenization map $\varpi$ can be used to
characterize the CP moment cone $\mc{CP}_{n,d}$.
Recall the truncated moment cone
$\mathscr{R}_d(\overline{\Dt})$ as in \reff{eq:tmscone}.

\begin{lemma} \label{lm:rel:CPnd:dhmg}
A htms $y \in \re^{ \overline{\N}^n_d }$ has the decomposition
\be \label{eq:ydecomp:lem}
y \,=\, \lmd_1 [u_1]_d^\hm + \cdots + \lmd_r [u_r]_d^\hm
\ee
for some $\lambda_i \in \re, u_i \in {\Delta}$ if and only if
\be \label{eq:var(y)decp:lem}
\varpi^{-T}(y) \,=\, \lmd_1 [v_1]_d + \cdots + \lmd_r [v_r]_d,
\ee
where each $v_i$ is the subvector of first $n-1$ entries of $u_i$.
Therefore, a htms $y\in\mc{CP}_{n,d}$ if and only if
$\varpi^{-T}(y)\in\mathscr{R}_d(\overline{\Delta})$.
\end{lemma}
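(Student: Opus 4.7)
The plan is to reduce everything to a single computation: identify $\varpi^{-T}([u]_d^{\hm})$ when $u \in \Delta$. Once this is done, both directions of the equivalence (and hence the corollary about $\mc{CP}_{n,d}$ and $\mathscr{R}_d(\overline{\Delta})$) fall out by linearity of $\varpi^{-T}$ and the obvious bijection between $\Delta$ and $\overline{\Delta}$ given by $u \mapsto v = (u_1,\ldots,u_{n-1})$, with inverse $v \mapsto (v, 1-e^Tv)$.

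First I would establish the key identity $\varpi^{-T}([u]_d^{\hm}) = [v]_d$. The natural route is through the duality pairing: for every form $f \in \re[x]_d^{\hm}$, direct evaluation gives $\langle f, [u]_d^{\hm}\rangle = f(u)$, while $\langle \varpi(f), [v]_d \rangle = \varpi(f)(v) = f(v, 1-e^Tv) = f(u)$ since $u \in \Delta$. On the other hand, formula \reff{<f,y>=<omg(f),omg-T(y)>} from the preceding discussion yields $\langle f, [u]_d^{\hm}\rangle = \langle \varpi(f), \varpi^{-T}([u]_d^{\hm}) \rangle$. Since $\varpi$ is an isomorphism, $\varpi(f)$ ranges over all of $\re[\bar{x}]_d$, so matching these two expressions for all $f$ forces $\varpi^{-T}([u]_d^{\hm}) = [v]_d$ as elements of $\re^{\N_d^{\bar{n}}}$.

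Next I would assemble the lemma. By the linearity of $\varpi^{-T}$, a decomposition $y = \sum_i \lmd_i [u_i]_d^{\hm}$ with $u_i \in \Delta$ immediately gives $\varpi^{-T}(y) = \sum_i \lmd_i [v_i]_d$ with $v_i \in \overline{\Delta}$. Conversely, given $\varpi^{-T}(y) = \sum_i \lmd_i [v_i]_d$ with $v_i \in \overline{\Delta}$, setting $u_i := (v_i, 1-e^Tv_i) \in \Delta$ reconstructs $y = \sum_i \lmd_i [u_i]_d^{\hm}$ by applying the key identity coordinate-wise and invoking injectivity of $\varpi^{-T}$. The final assertion about $\mc{CP}_{n,d}$ then follows by restricting the equivalence to nonnegative scalars $\lmd_i \ge 0$ and comparing with the definitions \reff{cone:CP:nd} and \reff{eq:tmscone} of $\mc{CP}_{n,d}$ and $\mathscr{R}_d(\overline{\Delta})$.

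No step should pose a real obstacle; the content of the lemma is essentially bookkeeping about the fact that dehomogenization intertwines evaluation on $\Delta$ with evaluation on $\overline{\Delta}$. The only place one has to be a little careful is in passing through the adjoint $\varpi^{-T}$ cleanly, which is why I would anchor the argument in the pairing identity \reff{<f,y>=<omg(f),omg-T(y)>} rather than attempt to write out $\varpi^{-T}([u]_d^{\hm})$ component by component from the explicit formula for $\varpi^{-1}$.
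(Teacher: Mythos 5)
Your proposal is correct and follows essentially the same route as the paper: the core computation is the pairing identity $\langle f,[u]_d^{\hm}\rangle = f(u) = \varpi(f)(v) = \langle \varpi(f),[v]_d\rangle$ combined with \reff{<f,y>=<omg(f),omg-T(y)>}, exactly as in the paper's argument. Isolating the rank-one identity $\varpi^{-T}([u]_d^{\hm})=[v]_d$ first and then invoking linearity is only a cosmetic reorganization of the paper's proof, which runs the same pairing computation directly on the sum.
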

\begin{proof}
Suppose \reff{eq:ydecomp:lem} holds. Let $w = \lmd_1 [v_1]_d + \cdots + \lmd_r [v_r]_d$,
where $v_i$ is the subvector of first $n-1$ entries of $u_i$.
For every $f\in \re[x]_d^{\hm}$ and each $u_i \in \Dt$,
\be \label{eq:lem:con}
\langle f,[u_i]_d^{\hm} \rangle = f(u_i) = \varpi(f)(v_i) = \langle \varpi(f), [v_i]_d \rangle.
\ee
Given the decomposition \reff{eq:ydecomp:lem}, we have
\[ \langle f, y \rangle  \,= \,\sum_{i=1}^r \lambda_i\langle f, [u_i]_d^{\hm} \rangle  \,=\, \sum_{i=1}^r\lambda_i\langle \varpi(f),[v_i]_d\rangle\,=\, \langle \varpi(f),w\rangle. \]
This is true for all $f\in\re[x]_d^{\hm}$, so $\varpi^{-T}(y) = w$,
in view of \reff{<f,y>=<omg(f),omg-T(y)>}.

Conversely, suppose \reff{eq:var(y)decp:lem} holds for
$v_i\in \overline{\Dt}$.
Let $u_i = (v_i,1-e^Tv_i)$ for each $i$. Then, for every $f\in\re[x]_d^{\hm}$,
we have $\varpi(f)(v_i) = f(u_i)$ in view of the definition of
the map $\varpi$. As in \reff{eq:lem:con},  we get
\[
\langle \varpi(f), \varpi^{-T}(y) \rangle = \langle f,\lambda_1[u_1]_d^{\hm}+\cdots+\lambda_r[u_r]_d^{\hm}\rangle,
\]
for all $f \in\re[x]_d^{\hm}$.
The equation \reff{<f,y>=<omg(f),omg-T(y)>} implies
the decomposition \reff{eq:ydecomp:lem}.

Note that $y\in \mc{CP}_{n,d}$ is equivalent to
\[
y \,=\, \lmd_1 [u_1]_d^\hm + \cdots + \lmd_r [u_r]_d^\hm
\]
for some $u_1,\ldots u_r\in \Dt$ and $\lambda_1,\ldots, \lambda_r\in \re_+$.
By the first part, $y \in \mc{CP}_{n,d}$ is equivalent to
$\varpi^{-T}(y) \in \mathscr{R}_d(\overline{\Dt})$.
\end{proof}

\subsection{A relaxation hierarchy}
\label{ssc:relhier}

Lemma~\ref{lm:rel:CPnd:dhmg} implies the equivalence
\be
\label{equiv:CP=Rd}
y \in \mc{CP}_{n,d}  \quad \Longleftrightarrow \quad
\varpi^{-T}(y) \in \mathscr{R}_d(\overline{\Dt}).
\ee
Therefore, approximations for the truncated moment cone $\mathscr{R}_d(\overline{\Dt})$
can be used to approximate $\mc{CP}_{n,d}$.
The simplicial set $\overline{\Dt}$ can be equivalently described as
\be \label{simlexpoly:h:g}
\overline{\Dt} = \{x\in\re^{\bar{n}}:
x_1 \ge 0, \ldots,x_{\bar{n}} \ge 0,
1-e^T\bar{x} \ge 0, 1-\|\bar{x}\|^2 \ge 0 \}.
\ee
The ball constraint $1-\|\bar{x}\|^2\ge 0$ is redundant,
but it can help to get tighter relaxations.
For an order $k \ge 1$, recall the moment cone
$\mathscr{S}[\overline{\Dt}]_{2k}$ defined as in \reff{def:Sg_2d}.
Note that $\mathscr{R}_{2k}(\overline{\Dt}) \subseteq
\mathscr{S}[\overline{\Dt}]_{2k}$ for every $k$.
For each $k \ge d_0 \coloneqq \lceil d/2 \rceil$, define the projection
\be \label{def:Sg:2k:d}
\mathscr{F}[\overline{\Dt}]_k  \coloneqq
\left\{ y \in \re^{\N_d^{\bar{n}}}\Big|
\begin{array}{c}
\exists \, z \in \mathscr{S}[\overline{\Dt}]_{2k}, \\
y_{\alpha} = z_{\alpha}\,(\alpha\in\N_d^{\bar{n}})
\end{array} \right\}.
\ee
It is worthy to note the nesting containment:
\be
\nn \label{nest:RdDt}
\mathscr{F}[\overline{\Dt}]_{d_0} \supseteq \mathscr{F}[\overline{\Dt}]_{d_0+1}
\supseteq \cdots\supseteq \mathscr{R}_d(\overline{\Dt}).
\ee
This motivates the following approximation for $\mc{CP}_{n,d}$:
\be
\label{def:dSk:CPnd}
\HM{\overline{\Dt}}_k  \coloneqq  \{y\in\re^{\overline{\N}_d^n}:
\varpi^{-T}(y)\in\mathscr{F}[\overline{\Dt}]_{k} \}.
\ee
The approximation property of $\HM{\overline{\Dt}}_k$ is given as follows.

\begin{theorem} \label{thm:QMaprx:COCP}
Let $\HM{\overline{\Dt}}_k$ be as above. Then, we have
\be \label{eq:CP=HM}
\bigcap\limits_{k \ge d_0}
\HM{\overline{\Dt}}_k = \mc{CP}_{n,d},
\quad
 \bigcap\limits_{k \ge d_0} \phi^{-1}\big(
\HM{\overline{\Dt}}_k \big) = \mc{CP}_{n}^{\otimes d} .
\ee
\end{theorem}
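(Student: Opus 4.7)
The plan is to prove the first equality $\bigcap_{k \ge d_0}\HM{\overline{\Dt}}_k = \mc{CP}_{n,d}$, after which the second equality follows immediately by applying the linear isomorphism $\phi^{-1}$ since $\phi^{-1}(\mc{CP}_{n,d}) = \mc{CP}_n^{\otimes d}$. By Lemma~\ref{lm:rel:CPnd:dhmg}, membership in $\mc{CP}_{n,d}$ is equivalent to $\varpi^{-T}(y) \in \mathscr{R}_d(\overline{\Dt})$, while by the definition \reff{def:dSk:CPnd}, membership in $\HM{\overline{\Dt}}_k$ is equivalent to $\varpi^{-T}(y) \in \mathscr{F}[\overline{\Dt}]_k$. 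Since $\varpi^{-T}$ is a linear isomorphism, the problem reduces to establishing the identity
\[
\bigcap_{k \ge d_0} \mathscr{F}[\overline{\Dt}]_k \, = \, \mathscr{R}_d(\overline{\Dt}).
\]

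For the easy inclusion $\mathscr{R}_d(\overline{\Dt}) \subseteq \mathscr{F}[\overline{\Dt}]_k$, I would observe that for every $v \in \overline{\Dt}$ the full tms $[v]_{2k}$ lies in $\mathscr{S}[\overline{\Dt}]_{2k}$, because each moment/localizing matrix equals $g_i(v)\cdot [v]_{k'}[v]_{k'}^T$ with $g_i(v) \ge 0$, and nonnegative combinations preserve this. The degree-$d$ truncation then belongs to $\mathscr{F}[\overline{\Dt}]_k$.

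For the nontrivial reverse inclusion I would argue by contradiction with a separating hyperplane. Suppose $y^* \in \bigcap_{k\ge d_0}\mathscr{F}[\overline{\Dt}]_k$ but $y^* \notin \mathscr{R}_d(\overline{\Dt})$. Since $\overline{\Dt}$ is compact, $\mathscr{R}_d(\overline{\Dt})$ is a closed convex cone, so there exists $p \in \re[\bar{x}]_d$ with $\langle p, y^*\rangle < 0$ and $p(v) = \langle p,[v]_d\rangle \ge 0$ for all $v \in \overline{\Dt}$, i.e., $p \in \mathscr{P}_d(\overline{\Dt})$. For any $\epsilon>0$, the perturbation $p+\epsilon$ is strictly positive on $\overline{\Dt}$. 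Because $\qmod{\overline{\Dt}}$ is archimedean (it contains $1-\|\bar{x}\|^2$ and $\overline{\Dt}$ lies in the unit ball), Putinar's Positivstellensatz yields $p+\epsilon \in \qmod{\overline{\Dt}}_{2k}$ for some sufficiently large $k \ge d_0$. Pick the extension $z^*\in\mathscr{S}[\overline{\Dt}]_{2k}$ of $y^*$ guaranteed by $y^*\in\mathscr{F}[\overline{\Dt}]_k$. Since $\deg(p+\epsilon)\le d \le 2k$, the dual pairing \reff{eq:dualQMnS} gives
\[
\langle p,y^*\rangle + \epsilon\, y^*_0 \, = \, \langle p+\epsilon, z^*\rangle \, \ge \, 0.
\]
The entry $y^*_0$ is nonnegative (it is a diagonal entry of the psd matrix $M_k[z^*]$), so letting $\epsilon \to 0^+$ gives $\langle p, y^*\rangle \ge 0$, contradicting $\langle p, y^*\rangle < 0$. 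This yields the reverse inclusion and completes the proof.

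The main obstacle is the hard inclusion: it requires two ingredients that must be combined carefully, namely Putinar's Positivstellensatz (which needs the archimedean property, secured precisely by the redundant ball constraint included in \reff{simlexpoly:h:g}) and the degree bookkeeping ensuring that $p+\epsilon$ still has degree at most $d$ so that its pairing with the truncated sequence $y^*$ is well defined via the extension $z^*$. Everything else — the reduction through $\varpi^{-T}$ and $\phi$, the nested containments, and the closedness of $\mathscr{R}_d(\overline{\Dt})$ — is largely formal.
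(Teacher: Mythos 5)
Your proposal is correct and follows essentially the same route as the paper's proof: a separating functional for the closed moment cone, an $\epsilon$-perturbation to get strict positivity on $\overline{\Dt}$, Putinar's Positivstellensatz via the archimedean quadratic module, and the duality $(\qmod{\overline{\Dt}}_{2k})^* = \mathscr{S}[\overline{\Dt}]_{2k}$ to reach a contradiction. The only cosmetic differences are that the paper separates with a copositive form $f$ before dehomogenizing (rather than with $p \in \mathscr{P}_d(\overline{\Dt})$ directly) and fixes $\epsilon$ small enough to keep the pairing negative instead of letting $\epsilon \to 0^+$; both variants are equivalent.
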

\begin{proof}
Let $\HM{\overline{\Dt}}$ be the intersection of all $\HM{\overline{\Dt}}_k$.
Since $\mc{CP}_{n,d}\subseteq\HM{\overline{\Dt}}_k$ for all $k\ge d_0$,
so $\mc{CP}_{n,d}\subseteq\HM{\overline{\Dt}}$.
We next prove the reverse containment also holds.
Suppose otherwise there exists $y\in \HM{\overline{\Dt}}$
such that $y\not\in \mc{CP}_{n,d}$.
Then, there must exist a form $f\in \mc{COP}_{n,d}$ such that
\[ \langle f,y \rangle = \langle \varpi(f),\varpi^{-T}(y)\rangle < 0. \]
The existence of the above $f$ is implied by the duality
between copositive and CP tensor cones.
The form $\varpi(f)$ is nonnegative over $\overline{\Dt}$.
For $\varepsilon>0$ sufficiently small, we have
\[
\langle \varpi(f)+\varepsilon,\varpi^{-T}(y) \rangle <0,
\quad  \varpi(f)+\varepsilon>0 \  \text{on} \ \overline{\Dt}.
\]
{Because $\qmod{\overline{\Dt}}$ is archimedean,
the $\varpi(f)+\varepsilon\in \qmod{\overline{\Dt}}_{2k}$
for some $k$}, by Putinar's Positivstellensatz.
Since $\qmod{\overline{\Dt}}_{2k}$ is dual to
$\mathscr{S}[\overline{\Dt}]_{2k}$ and
$\varpi^{-T}(y)\in \mathscr{F}[\overline{\Dt}]_{k}$,
we get the contradiction
\[
\langle \varpi(f)+\varepsilon,\varpi^{-T}(y)\rangle\ge 0 .
\]
Therefore, the first equality in \reff{eq:CP=HM} holds.
The second equality in \reff{eq:CP=HM} follows from the
definition of the linear map $\phi$ as in \reff{symt2mom:varho}.
\end{proof}

\subsection{Checking memberships}
\label{ssc:MemCP}

We discuss how to detect memberships for the CP cones.
For a given tms $y\in\re^{ \bar{\N}^n_d }$,
checking the membership $y\in\mc{CP}_{n,d}$
is equivalent to checking $\varpi^{-T}(y)\in \mathscr{R}_d(\overline{\Dt})$,
by Lemma~\ref{lm:rel:CPnd:dhmg}.
We refer to \cite{Nie14Atkmp}
for how to solve truncated moment problems.

The adjoint inverse map $z =\varpi^{-T}(y)$ can be {shown} as follows.
For each power $\alpha\in\N_d^{\bar{n}}$, it holds that
\[
\varpi^{-1}(\bar{x}^{\alpha})
= \bar{x}^{\alpha}(e^Tx)^{d-|\alpha|}.
\]
For $y\in\re^{\overline{\N}_d^{n}}$, the map $z =\varpi^{-T}(y)$ is given as
\be  \label{eq:varpi:adjinv}
z_{\alpha} = \langle \bar{x}^\af,  \varpi^{-T}(y) \rangle  =
\langle \varpi^{-1} (\bar{x}^\af),  y \rangle  =
\langle  \bar{x}^\af (e^Tx)^{d-|\af|},  y \rangle
\ee
for every $\af  \in \N^{\brn}_d$.
The following is an exposition example.

\begin{example}
\label{ep:ytow}
For the case $n=3,\, d=3$ and
\[
\begin{array}{l}
y = (y_{300},\, y_{210},\, y_{201},\, y_{120},\, y_{111},\, y_{102},\, y_{030},\, y_{021},\, y_{012},\, y_{003}),\\
y_{300} = 3,\, y_{210}=3,\, y_{201}=1,\, y_{120}=2,\, y_{111} = -1,\\
y_{102} = 0,\, y_{030} = 2,\, y_{021} = 2,\, y_{012} = 3,\, y_{003} = 3.
\end{array}
\]
The tms $z =\varpi^{-T}(y)$ is given such that
\[
\begin{array}{l}
z_{00} = y_{300}+3y_{210}+3y_{201}+3y_{120}+6 y_{111}+3y_{102}+y_{030}+3y_{021}\\
\qquad\quad+3y_{012}+ y_{003} = 35,\\
z_{10} = y_{300}+2y_{210}+2y_{021}+y_{120}+2y_{111}+y_{102} = 11 ,\\
z_{01} = y_{210}+2y_{120}+2y_{111}+y_{030}+2y_{021}+y_{012} = 14,\\
z_{20} = y_{300}+y_{210}+y_{201} = 7,\\
z_{11} = y_{210}+y_{120}+y_{111} = 4,\\
z_{02} = y_{120}+y_{030}+y_{021} = 6,\\
z_{30} = y_{300} = 3,\quad z_{21} = y_{210} = 3,\\ z_{12} = y_{120} = 2,\quad z_{03} = y_{030} = 2.
\end{array}
\]
\end{example}

The following is an algorithm for
checking memberships for the cones $\mc{CP}_{n,d}$ and $\mc{CP}_n^{\otimes d}$.

\begin{alg} \label{sdpalg:CPnd}
For the given $\mc{A}\in\ttS^d(\re^n)$,
let $d_0 := \lceil d/2\rceil$, $d_1  :=  \lceil (d+1)/2 \rceil$
and $y = \phi(\mc{A})$.
Do the following:
\begin{description}
\setlength{\itemindent}{.01in}
\item [Step 0]
Set $k=d_1$ and generate a generic polynomial
$R \in \Sigma[\bar{x}]_{2d_1}$.

\item [Step 1]
Solve the semidefinite optimization
\be
\label{eq:Dmom:CP_nd}
\left\{ \baray{rl}
\min & \langle R, z \rangle  \\
\st & z_\af =   \langle  \bar{x}^\af (e^Tx)^{d-|\af|},  y \rangle
\quad  \text{for} \, \,  \af \in \N_d^{\bar{n}} ,  \\
&   z \in\mathscr{S}[\overline{\Dt}]_{2k}.
\earay \right.
\ee
If \reff{eq:Dmom:CP_nd} is infeasible, output that
$y \not\in \mc{CP}_{n,d}$ and $\mc{A}$ is not CP and stop;
otherwise, solve it for a minimizer $z^*$ and let $t \coloneqq d_0$.

\item [Step 2]
Check whether or not the rank condition
\be
\label{eq:rankcondi}
\rank \, M_t[z^*] =  \rank \, M_{t-1}[z^*]
\ee
holds. If it does not hold, go to Step 3;
if it holds, go to Step~4.

\item [Step 3]
If $t<k$, update $t \coloneqq t+1$ and go to Step~2.
Otherwise, update $k \coloneqq k+1$ and go to Step~1.

\item [Step~4]
Let $r  \coloneqq  \rank \, M_t[z^*]$. Compute the $r$-atomic measure
$\mu = \lmd_1 \dt_{v_1} + \cdots + \lmd_r \dt_{v_r}$
for the truncation $z^*|_{2t}$.
Output the CP tensor decomposition
\be  \label{eq:y:CPalgdcomp}
\mc{A} = \lmd_1 \Big[\bpm v_1 \\ 1-e^Tv_1 \epm \Big]^{\otimes d} + \cdots +
\lmd_r \Big[\bpm v_r \\ 1-e^Tv_r \epm \Big]^{\otimes d} .
\ee

\end{description}

\end{alg}

In Step~0, the SOS polynomial $R$ can be chosen as
$R = [\bar{x}]_{d_1}^TR_1^TR_1[\bar{x}]_{d_1}$ for some generic matrix {$R_1$}.
In Step~2, the rank condition \reff{eq:rankcondi} is called the {\it flat truncation},
which is a sufficient and nearly necessary condition for
checking the convergence of moment relaxations; see \cite{Nie13FlatTrunc}.
Once the flat truncation condition is satisfied,
the $r$-atomic measure can be obtained
by using Schur decompositions \cite{HenJas05Glopt}.
In Step~4, the CP decomposition \reff{eq:y:CPalgdcomp} is equivalent to
\[
y = \lmd_1 \Big[\bpm v_1 \\ 1-e^Tv_1 \epm \Big]_d^\hm + \cdots +
\lmd_r \Big[\bpm v_r \\ 1-e^Tv_r \epm \Big]_d^\hm  .
\]
Algorithm~\ref{sdpalg:CPnd} can be implemented with the software
\texttt{GloptiPoly 3} \cite{GloPol3}
and \texttt{SeDuMi} \cite{sturm1999using}.
The asymptotic convergence of Algorithm~\ref{sdpalg:CPnd} is shown as follows.

\begin{theorem}
\label{thm:asymconv}
Let $R$ in \reff{eq:Dmom:CP_nd} be a generic SOS polynomial and let $y = \phi(\mc{A})$.
If $\mc{A}$ is not CP, then the semidefinite relaxation
\reff{eq:Dmom:CP_nd} must be infeasible when $k$ is large enough.
If $\mc{A}$ is CP, then (\ref{eq:Dmom:CP_nd}) has a minimizer for each $k\ge d_1$.
Denote by $z^{*,k}$ the minimizer of (\ref{eq:Dmom:CP_nd}) for the relaxation order $k$.
Then, there exists $t \ge d_1$ such that the truncation sequence
$\{z^{*,k}|_{2t}\}_{k=d_1}^{\infty}$ is bounded
and all its accumulation points satisfy (\ref{eq:rankcondi}).
\end{theorem}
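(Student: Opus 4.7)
The plan splits into three parts matching the three assertions. Throughout I write $y=\phi(\mc A)$.

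\textbf{Infeasibility when $\mc A$ is not CP.} The family $\{\HM{\overline\Dt}_k\}_{k\ge d_0}$ is decreasing in $k$: truncating any $z\in\mathscr{S}[\overline\Dt]_{2(k+1)}$ to degree $2k$ yields an element of $\mathscr{S}[\overline\Dt]_{2k}$, so $\mathscr{F}[\overline\Dt]_{k+1}\subseteq\mathscr{F}[\overline\Dt]_{k}$ and hence $\HM{\overline\Dt}_{k+1}\subseteq\HM{\overline\Dt}_k$. Theorem~\ref{thm:QMaprx:COCP} identifies the intersection as $\mc{CP}_{n,d}$. Thus if $y\notin\mc{CP}_{n,d}$, then $y\notin\HM{\overline\Dt}_{k_0}$ for some $k_0$, and hence for all $k\ge k_0$. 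By the definitions in \reff{def:Sg:2k:d}--\reff{def:dSk:CPnd}, this is precisely infeasibility of \reff{eq:Dmom:CP_nd} at every such $k$.

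\textbf{Feasibility and existence of a minimizer when $\mc A$ is CP.} Since $\mc{CP}_{n,d}\subseteq \HM{\overline\Dt}_k$, any CP decomposition of $\mc A$ produces a feasible $z$ via Lemma~\ref{lm:rel:CPnd:dhmg}. To extract a minimizer I show the feasible set of \reff{eq:Dmom:CP_nd} is compact. The linear constraint at $\alpha=0$ fixes $z_0=\langle(e^Tx)^d,y\rangle$. Applying $L_{1-\|\bar x\|^2}^{(k)}[z]\succeq 0$ to test polynomials $\bar x^\beta$ gives $\sum_i z_{2\beta+2e_i}\le z_{2\beta}$, so inductively $z_{2\alpha}\le z_0$ for every $|\alpha|\le k$; then $M_k[z]\succeq 0$ combined with Cauchy--Schwarz on the principal $2\times 2$ minors yields $|z_{\gamma+\delta}|\le\sqrt{z_{2\gamma}z_{2\delta}}\le z_0$ for any $|\gamma|,|\delta|\le k$. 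Thus the feasible set is bounded; closedness is immediate from continuity of the affine and PSD constraints, so the linear objective $\langle R,z\rangle$ attains its minimum.

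\textbf{Accumulation points and flat truncation.} Fix any $t\ge d_1$. The same estimate gives uniform boundedness of $\{z^{*,k}|_{2t}\}_{k\ge d_1}$. Let $\bar z$ be an accumulation point. A diagonal extraction along $k\to\infty$ extends $\bar z$ to an infinite tms $\tilde z\in\re^{\N^{\bar n}}$ whose every truncation lies in the corresponding $\mathscr{S}[\overline\Dt]_{2k'}$. Since $\qmod{\overline\Dt}$ is archimedean, Putinar's Positivstellensatz in its dual form supplies a Borel measure $\mu$ on $\overline\Dt$ with $\tilde z_\alpha=\int\bar x^\alpha\,d\mu$. Passing the linear constraints in \reff{eq:Dmom:CP_nd} to the limit gives $\int\bar x^\alpha\,d\mu=\langle\bar x^\alpha(e^Tx)^{d-|\alpha|},y\rangle$ for $\alpha\in\N^{\bar n}_d$, so by Lemma~\ref{lm:rel:CPnd:dhmg} the measure $\mu$ encodes a CP decomposition of $\mc A$. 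By minimality of the iterates and Fatou/lower semicontinuity, $\mu$ is optimal for the generalized moment problem of minimizing $\int R\,d\mu$ over nonnegative measures on $\overline\Dt$ whose degree-$d$ moments equal $\varpi^{-T}(y)$. For a generic SOS polynomial $R\in\Sigma[\bar x]_{2d_1}$, this optimizer is unique and finitely supported, and the flat-truncation analysis of \cite{Nie13FlatTrunc} applied to the moment-SDP dual yields an order $t\ge d_1$, independent of the subsequence, at which every accumulation point satisfies $\rank M_t[\bar z]=\rank M_{t-1}[\bar z]$.

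\textbf{Main obstacle.} The delicate step is the last one, namely transferring the genericity-plus-flat-truncation theorem of \cite{Nie13FlatTrunc}---stated for standard Lasserre relaxations of polynomial optimization over a compact basic semialgebraic set---to the generalized moment problem here, where the ``objective'' is the linear functional $z\mapsto\langle R,z\rangle$ and the ``feasibility'' is moment matching to $\varpi^{-T}(y)$. The key technical point is verifying that generic $R$ forces the extremality/KKT conditions of this problem to be nondegenerate, which in turn yields uniqueness of the optimal measure, a finite support bound, and flat truncation at a bounded order. Once this is in hand, the atomic extraction used in Step~4 of Algorithm~\ref{sdpalg:CPnd} is standard \cite{HenJas05Glopt}.
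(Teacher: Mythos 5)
Your proposal is correct and follows essentially the same route as the paper; the differences are worth recording. For existence of minimizers when $\mc{A}$ is CP, the paper argues that the objective $\langle R,z\rangle$ is coercive on the feasible set because $R$ is a generic SOS polynomial, whereas you show the feasible set of \reff{eq:Dmom:CP_nd} is itself compact: the constraint at $\alpha=0$ pins $z_0$, the diagonal of $L_{1-\|\bar{x}\|^2}^{(k)}[z]\succeq 0$ together with $M_k[z]\succeq 0$ bounds all $z_{2\alpha}$ by $z_0$, and the $2\times 2$ minors of $M_k[z]$ bound the remaining entries. This is slightly stronger (genericity of $R$ is not needed for attainment) and is in fact the same estimate the paper deploys later in its convergence proof for Algorithm~\ref{alg:linconc:CP}. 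For the final claim about accumulation points and \reff{eq:rankcondi}, the paper does no work of its own: it invokes \cite[Theorem~5.3]{Nie14Atkmp}, which is stated precisely for $\mc{A}$-truncated $K$-moment problems of this form and already packages the diagonal extraction, the Putinar-type representing measure, the genericity of $R$ in the SOS cone, the uniqueness and finite support of the optimal measure, and flat truncation of all accumulation points at a bounded order $t$. Your reconstruction of that argument is a sound sketch, but the step you flag as the main obstacle---transferring the flat-truncation analysis of \cite{Nie13FlatTrunc} from standard Lasserre relaxations to this generalized moment problem---is exactly the content of \cite[Theorem~5.3]{Nie14Atkmp}; citing it directly closes the gap, whereas adapting \cite{Nie13FlatTrunc} by hand would require redoing the genericity/nondegeneracy analysis for the linear-functional objective, which neither you nor the paper carries out explicitly.
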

\begin{proof}
When $\mc{A}$ is not CP, we know $y\not\in\HM{\overline{\Dt}}_k$ for all $k$ large enough,
by Theorem \ref{thm:QMaprx:COCP}. This means that
$\varpi^{-T}(y) \not\in \mathscr{S}[\overline{\Dt}]_k$,
so \reff{eq:Dmom:CP_nd} is infeasible for $k$ large enough.
When $\mc{A}$ is CP, we have $y \in\HM{\overline{\Dt}}_k$ for all $k$,
so \reff{eq:Dmom:CP_nd} is feasible for all $k\ge d_1$.
Moreover, since $R$ is a generic SOS polynomial,  the objective of \reff{eq:Dmom:CP_nd}
is coercive in its feasible set and it must have a minimizer $z^{*,k}$.
The convergence conclusion follows from \cite[Theorem~5.3]{Nie14Atkmp}.
\end{proof}

Under some additional assumptions, Algorithm~\ref{sdpalg:CPnd}
has finite convergence.
Consider the following moment optimization problem
\be
\label{eq:Rdmom}
\left\{ \baray{rl}
\min & \langle R, z \rangle  \\
\st & z_\af = \langle  \bar{x}^\af (e^Tx)^{d-|\af|},  y \rangle \,\,
\text{for each} \,\, \af \in \N^{\bar{n}}_d, \\
& z\in\mathscr{R}_{d_1}(\overline{\Dt}).
\earay \right.
\ee
The cone $\mathscr{R}_{d_1}(\overline{\Dt})$ is closed
and its dual cone is $\mathscr{P}_{d_1}(\overline{\Dt})$.
The dual optimization problem of (\ref{eq:Rdmom}) is
\be  \label{eq:Rdpos}
\left\{ \baray{rl}
\max & \langle \rho,\varpi^{-T}(y)\rangle  \\
\st & R-\rho\in \mathscr{P}_{d_1}(\overline{\Dt}), \\
    & \rho \in \re[\bar{x}]_{d}.
\earay \right.
\ee
\begin{theorem}
\label{tm:finconv_cp}
Let $R$ in \reff{eq:Dmom:CP_nd} be a generic SOS polynomial
and let $y = \phi(\mc{A})$.
Suppose $\mc{A}$ is CP.
If (\ref{eq:Rdpos}) has a minimizer $\rho^*$ such that
$R-\rho^*\in\qmod{\overline{\Dt}}$,
then Algorithm~\ref{sdpalg:CPnd} must terminate
within finitely many steps
and output a CP decomposition for $\mc{A}$.
\end{theorem}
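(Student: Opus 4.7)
The plan is to combine Lagrangian duality between \reff{eq:Rdmom} and \reff{eq:Rdpos} with the finite-degree membership guaranteed by $R-\rho^*\in\qmod{\overline{\Dt}}$ to force the semidefinite relaxation \reff{eq:Dmom:CP_nd} to be exact at some finite order, and then to invoke genericity of $R$ together with flat truncation to extract an atomic representing measure that can be converted back into a CP decomposition via Lemma~\ref{lm:rel:CPnd:dhmg}.

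First I would unfold the dual of \reff{eq:Dmom:CP_nd}: by \reff{eq:dualQMnS}, it maximizes $\langle \rho, \varpi^{-T}(y)\rangle$ over $\rho\in\re[\bar{x}]_d$ subject to $R-\rho\in\qmod{\overline{\Dt}}_{2k}$. Since $\qmod{\overline{\Dt}} = \bigcup_k \qmod{\overline{\Dt}}_{2k}$, the hypothesis yields a finite $k^*\ge d_1$ with $R-\rho^*\in\qmod{\overline{\Dt}}_{2k^*}$, so $\rho^*$ is dual-feasible for \reff{eq:Dmom:CP_nd} at every $k\ge k^*$. Because $\mc{A}$ is CP, Lemma~\ref{lm:rel:CPnd:dhmg} gives $\varpi^{-T}(y)\in\mathscr{R}_d(\overline{\Dt})$, so \reff{eq:Rdmom} is feasible and every feasible $z$ of \reff{eq:Rdmom} remains feasible for \reff{eq:Dmom:CP_nd}. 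A sandwich argument using weak duality then shows that the optimal values of \reff{eq:Dmom:CP_nd}, \reff{eq:Rdmom}, and \reff{eq:Rdpos} all coincide for every $k\ge k^*$, and any SDP minimizer $z^{*,k}$ attains the moment-problem optimum.

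Next, for a generic SOS polynomial $R$, the moment problem \reff{eq:Rdmom} admits a unique minimizer $z^*$, which is the moment sequence of a unique atomic measure $\mu^* = \sum_{i=1}^r \lmd_i\dt_{v_i}$ supported on $\overline{\Dt}$ with a minimal number of atoms $r$. Equality of optimal values forces $z^{*,k}$ to agree with $z^*$ on the indices of degree up to $2d_1$. The atomic nature of $\mu^*$ together with the Curto--Fialkow flat extension theorem guarantees that for some $t$ with $d_0\le t\le k$ the rank condition \reff{eq:rankcondi} is satisfied, so Step~2 of Algorithm~\ref{sdpalg:CPnd} succeeds after finitely many updates in Step~3. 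Step~4 then recovers the atoms of $\mu^*$ via the Schur-decomposition procedure of \cite{HenJas05Glopt}, and Lemma~\ref{lm:rel:CPnd:dhmg} lifts the points $(v_i,1-e^Tv_i)$ into the CP decomposition \reff{eq:y:CPalgdcomp} of $\mc{A}$.

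The main obstacle is justifying that genericity of $R$ propagates from the truncated moment problem \reff{eq:Rdmom} down to the SDP level, so that the SDP minimizer $z^{*,k}$ actually inherits the flat-truncation property at some finite order $t$. This step relies on \cite[Theorem~5.3]{Nie14Atkmp} together with the flat-truncation analysis of \cite{Nie13FlatTrunc}, which ensure that for a generic objective the moment relaxation is exact and that the unique optimal truncated moment sequence is realized by a finitely supported measure whose atoms are extractable from the flat moment matrix; this is the delicate step that links the finite-order exactness above to the termination criterion of the algorithm.
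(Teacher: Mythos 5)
Your first half (dual feasibility of $\rho^*$ at some finite order $k^*$ because $\qmod{\overline{\Dt}}=\bigcup_k\qmod{\overline{\Dt}}_{2k}$, plus a weak-duality sandwich showing the relaxation \reff{eq:Dmom:CP_nd} becomes exact) is consistent with the intended mechanism, but the final step has a genuine gap. You claim that the atomicity of the true optimal measure together with the Curto--Fialkow flat extension theorem forces the rank condition \reff{eq:rankcondi} to hold for the computed SDP minimizer $z^{*,k}$. That does not follow: the SDP minimizer is an element of $\mathscr{S}[\overline{\Dt}]_{2k}$ whose entries are only pinned to $\varpi^{-T}(y)$ up to degree $d$; its higher-degree entries need not be the moments of any measure, so knowing that the truncation $z^{*,k}|_d$ admits an atomic representing measure says nothing about whether $M_t[z^{*,k}]$ is flat for some $t\le k$. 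Flat truncation is a property of the computed object, and certifying it is exactly the hard part of the theorem. Your fallback citation of \cite[Theorem~5.3]{Nie14Atkmp} is also the wrong tool; that is the asymptotic convergence statement (used in Theorem~\ref{thm:asymconv}), not a finite-termination statement.

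The paper closes this gap differently. It first uses the identity
\[
1 - \| \bar{x} \|^2 = (1-e^T\bar{x})(1+ \| \bar{x} \|^2) +
\sum_{i=1}^{ \bar{n} } x_i \big[ (1-x_i)^2 + \sum_{j\ne i} x_j^2 \big]
\]
to replace $\qmod{\overline{\Dt}}$ by $\qmod{x_1,\ldots,x_{\bar{n}},1-e^T\bar{x}}$, so the hypothesis becomes a degree-bounded certificate over the simplex description alone. It then considers the auxiliary polynomial optimization problem \reff{eq:R-rho} of minimizing $R-\rho^*$ over $\overline{\Dt}$ and argues, by enumerating all active-constraint patterns, that for generic $R\in\Sigma[\bar{x}]_{2d_1}$ this problem has only finitely many complex critical points. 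With the representing measure for $\varpi^{-T}(y)$, the membership $R-\rho^*\in\qmod{x_1,\ldots,x_{\bar{n}},1-e^T\bar{x}}$, and this finiteness of critical points in hand, \cite[Theorem~5.5]{Nie14Atkmp} (not Theorem~5.3) delivers precisely the conclusion you need: the flat truncation condition holds at some finite order, so the algorithm terminates and extracts the atoms. If you want to repair your proof, you must supply an argument of this kind for why the computed $z^{*,k}$ is eventually flat; the atomicity of the underlying measure alone is not enough.
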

\begin{proof}
First notice the following representation
\[
1 - \| \bar{x} \|^2 = (1-e^T\bar{x})(1+ \| \bar{x} \|^2) +
\sum_{i=1}^{ \bar{n} } x_i
\big[ (1-x_i)^2 + \sum_{j\ne i} x_j^2 \big].
\]
This implies that
\[
\qmod{\overline{\Dt}} = \qmod{x_1, \ldots, x_{\bar{n}}, 1-e^T \bar{x} }.
\]
By the given assumption, we have
$R-\rho^* \in \qmod{x_1, \ldots, x_{\bar{n}}, 1-e^T \bar{x} }$.
Suppose $\mc{A}$ is CP, we know $\varpi^{-T}(y)$
admits a representing measure supported in $\overline{\Dt}$.
Consider the following polynomial optimization problem
\be \label{eq:R-rho}
\left\{ \baray{rl}
\min\limits_{\brx\in\re^{\brn}} & R-\rho^*  \\
\st & x_1\ge0,\dots,x_{\brn}\ge0, 1-e^T\brx\ge 0 .
\earay \right.
\ee
By enumerating all possibilities of active constraints,
one can check that for generic $R\in \Sigma[\bar{x}]_{2d_1}$,
the (\ref{eq:R-rho}) only has finitely many complex critical points.
Then, the finite convergence of Algorithm~\ref{sdpalg:CPnd}
follows from \cite[Theorem~5.5]{Nie14Atkmp}.
\end{proof}

In Theorem~\ref{tm:finconv_cp}, the condition $R-\rho^*\in\qmod{\overline{\Dt}}$
is almost necessary and sufficient for the finite convergence to occur.
Indeed, if a polynomial $f$ is nonnegative on $\overline{\Delta}$,
then $f \in\qmod{\overline{\Dt}}$ under some general assumptions
(see \cite{Nie14Optimality}).
The assumption $R-\rho^*\in\qmod{\overline{\Dt}}$ is usually satisfied.
The finite convergence is always observed in our numerical experiments.

\subsection{A comparison with the traditional approach}

\begin{table}[htb!]
\centering
\caption{Comparison of some dimensions}
\label{tab:dim}
\begin{tabular} {|c||c|c||c|c|} \hline
(n, k) & $\binom{n+2k}{2k}$ & $\binom{n-1+2k}{2k}$ & $\binom{n+k}{k}$ & $\binom{n-1+k}{k}$ \\  \hline
(2, 2)  &    15  & 5  & 6  & 3  \\  \hline
(2, 3)  &    28  & 7  &    10  & 4  \\  \hline
(2, 4)  &    45  & 9  &    15  & 5  \\  \hline
(3, 2)  &    35  &    15  &    10  & 6  \\  \hline
(3, 3)  &    84  &    28  &    20  &    10  \\  \hline
(3, 4)  &   165  &    45  &    35  &    15  \\  \hline
(4, 2)  &    70  &    35  &    15  &    10  \\  \hline
(4, 3)  &   210  &    84  &    35  &    20  \\  \hline
(4, 4)  &   495  &   165  &    70  &    35  \\  \hline
(5, 2)  &   126  &    70  &    21  &    15  \\  \hline
(5, 3)  &   462  &   210  &    56  &    35  \\  \hline
(5, 4)  &  1287  &   495  &   126  &    70  \\  \hline
\end{tabular}
\end{table}

To check whether or not $y\in\mc{CP}_{n,d}$,
people can solve moment relaxations directly on $y$.
This requires to solve a hierarchy of semidefinite relaxations as follows:
\be \label{eq:CPnondehom}
\left\{
\begin{array}{cl}
\min & \langle R, z\rangle\\
\st & y = z|_{\overline{\N}_d^n},\, L_{1-e^Tx}^{(k)}[z] = 0,\\
 & z \in \mathscr{S}[x,1-\|x\|^2]_{k},
\end{array}
\right.
\ee
where $R$ is a generic SOS polynomial in $\re[x]_{2d_1}$.
A similar algorithm can be obtained by solving \reff{eq:CPnondehom}.
We refer to \cite{Fan17sdpCP,Nie14Atkmp} for related work.
In the following, we compare the sizes of the relaxations
\reff{eq:Dmom:CP_nd} and \reff{eq:CPnondehom}.
For the relaxation~\reff{eq:CPnondehom},
the dimension of the tms $z \in\re^{\N_{2k}^n}$
is $\binom{n+2k}{2k}$, and the biggest length of
the matrix constraint (the moment matrix $M_k[z]$ has the biggest length)
is $\binom{n+k}{k}$, that is,
\[
\text{dimension of} \,\, z  = \binom{n+2k}{2k},\quad
\text{length of}\,\, M_k[z] = \binom{n+k}{k}.
\]
In comparison, for the relaxation \reff{eq:Dmom:CP_nd},
the corresponding sizes are
\[
\text{dimension of} \,\, z  = \binom{n-1+2k}{2k},\quad
\text{length of}\,\, M_k[z] = \binom{n-1+k}{k}.
\]
For some typical values of $n,k$,
their values are compared in Table~\ref{tab:dim}.
In computational practice,  the relaxation \reff{eq:Dmom:CP_nd}
is more efficient to solve than \reff{eq:CPnondehom}.
In Section~\ref{sc:ne}, we compare performance of
the dehomogenization approach \reff{eq:Dmom:CP_nd}
and the traditional approach \reff{eq:CPnondehom} for detecting
CP tensors in Table~\ref{tab:comparison}.

\section{Linear Conic Optimization with CP Tensors}
\label{sc:LinOptwithCP}

A general linear conic optimization problem
with CP tensors is
\be  \label{org:min<cy>:CPnd}
\left\{
\begin{array}{cl}
\min & c^Tw  \\
\st & \mc{A}_0 + \sum\limits_{i=1}^\ell w_i \mc{A}_i  \in \mc{CP}_n^{\otimes d}, \\
& f_0 + \sum\limits_{i=1}^\ell w_i f_i \in K .
\end{array}
\right.
\ee
In the above, the decision vector is $w \coloneqq (w_1,\ldots, w_\ell)$,
the $\mc{A}_0, \mc{A}_1, \ldots, \mc{A}_\ell$ are given
symmetric tensors in $\ttS^d(\re^n)$,
$K$ is the Cartesian product of some linear, second-order and semidefinite cones,
and $f_0, \ldots, f_\ell$ are given vectors in the {space} of $K$.
This contains a broad class of linear conic optimization
with CP tensors and various constraints.
Its dual optimization problem is
\be  \label{org:max<ap>:COPnd}
\left\{
\begin{array}{cl}
\max & -\langle \mc{A}_0, \mc{X} \rangle - f_0^T \eta \\
\st & \langle \mc{A}_i, \mc{X} \rangle + f_i^T \eta = c_i, \,
  i =1, \ldots, \ell,  \\
&  \mc{X}  \in  \mc{COP}_n^{\otimes d}, \, \eta \in K^* .
\end{array}
\right.
\ee
For each $i = 0, 1, \ldots, \ell$, let
\[
a_i = \phi(\mc{A}_i),
\]
{where $\phi$ is the linear map given as in \eqref{symt2mom:varho}.}
For convenience, denote the linear functions
\begin{equation}
\label{eq:awfw}
a(w) \, \coloneqq \, a_0 + \sum_{i=1}^\ell w_i a_i ,
\quad
f(w) \, \coloneqq \, f_0 + \sum_{i=1}^\ell w_i f_i .
\end{equation}
Then the tensor $\mc{A}(w)$ is CP if and only if $a(w)$
belongs to the CP moment cone $\mc{CP}_{n, d}$.
Therefore, the CP tensor optimization (\ref{org:min<cy>:CPnd}) is equivalent to
\be    \label{min<cy>:CPnd}
\left\{
\begin{array}{cl}
\min  & c^Tw  \\
\st & a_0 + \sum\limits_{i=1}^\ell w_i a_i  \in \mc{CP}_{n, d}, \\
& f_0 + \sum\limits_{i=1}^\ell w_i f_i \in  K .
\end{array}
\right.
\ee
Observe that
\[
  \langle \mc{A}_i, \mc{X} \rangle  \,  =  \,
  \langle a_i, \phi^{-T} ( \mc{X} ) \rangle .
\]
Let $p = \phi^{-T} ( \mc{X} )$, then the dual optimization
\reff{org:max<ap>:COPnd} is equivalent to
\be   \label{max<ap>:COPnd}
\left\{
\begin{array}{cl}
\max & -\langle a_0, p \rangle - f_0^T \eta \\
\st & \langle a_i, p \rangle + f_i^T \eta = c_i, \,
  i =1, \ldots, \ell,  \\
&  p \in  \mc{COP}_{n, d}, \, \eta \in K^* .
\end{array}
\right.
\ee
Recall the dehomogenization linear map $\varpi$ is given as in \reff{diamond:x:f}.
By Lemma~\ref{lm:rel:CPnd:dhmg}, we have $a(w) \in \mc{CP}_{n, d}$ if and only if
$\varpi^{-T}( a(w) ) \in \mathscr{R}_d( \overline{\Dt} ).$
For each $i = 0, 1, \ldots, \ell$, let
\[
\hat{a}_i  \, = \, \varpi^{-T}( a_i ) .
\]
Therefore, the optimization \reff{min<cy>:CPnd} is equivalent to
\be  \label{min<cy>:CPnd:varpi}
\left\{
\begin{array}{cl}
\min  & c^T w  \\
\st & \hat{a}_0 + \sum\limits_{i=1}^\ell w_i \hat{a}_i
\in \mathscr{R}_d( \overline{\Dt}  ), \\
& f_0 + \sum\limits_{i=1}^\ell w_i f_i  \in  K .
\end{array}
\right.
\ee
For convenience, we denote that
\begin{equation}
\label{eq:hataw}
\hat{a}(w)  \, \coloneqq  \, \hat{a}_0 + \sum_{i=1}^\ell w_i \hat{a}_i  .
\end{equation}
Observe that
\[
\langle a_i, p \rangle  \, = \, \langle \varpi^{-T}( a_i ),  \varpi(p) \rangle .
\]
Let $\hat{p}  = \varpi(p)$.
The dual optimization \reff{max<ap>:COPnd} is equivalent to
\be  \label{max:COP:varpi}
\left\{
\begin{array}{cl}
\max & -\langle \hat{a}_0, \hat{p}   \rangle - f_0^T \eta \\
\st & \langle \hat{a}_i, \hat{p}  \rangle + f_i^T \eta = c_i, \,
  i =1, \ldots, \ell,  \\
&  \hat{p} \in \mathscr{P}_d( \overline{\Dt}  ), \, \eta \in K^* .
\end{array}
\right.
\ee

The Moment-SOS hierarchy can be used to solve them.
The $k$th order moment relaxation for \reff{min<cy>:CPnd:varpi}  is
\be   \label{min<cw>:Lg[w]psd}
\left\{
\begin{array}{cl}
\min  & c^T w   \\
\st & \hat{a}_0 + \sum\limits_{i=1}^\ell w_i \hat{a}_i = z|_d , \\
&  L_{g_i}^{(k)}[z] \succeq 0, i=1,\ldots, n+1, \\
& f_0 + \sum\limits_{i=1}^\ell w_i f_i  \in  K , \\
& M_k[z] \succeq 0, z \in \re^{ \N_{2k}^{n-1}  }.
\end{array}
\right.
\ee
In the above, $(g_1, \ldots, g_n, g_{n+1})  =
(x_1, \ldots, x_{n-1}, 1 -e^T\bar{x}, 1 - \| \bar{x} \|^2)$.
Its dual problem is the $k$th order SOS relaxation
\be   \label{max:QMSOS:varpi}
\left\{
\begin{array}{cl}
\max & -\langle \hat{a}_0, \hat{p} \rangle - f_0^T \eta \\
\st & \langle \hat{a}_i, \hat{p}  \rangle + f_i^T \eta = c_i, \,
  i =1, \ldots, \ell,  \\
&  \hat{p} \in \qmod{\overline{\Dt}}_{2k}, \, \eta \in K^* .
\end{array}
\right.
\ee

The following is the Moment-SOS algorithm for
solving the linear conic optimization \reff{org:min<cy>:CPnd}.

\begin{alg}
\label{alg:linconc:CP}
Let $d_0  \coloneqq  \lceil d/2 \rceil$ and $k \coloneqq d_0$.
Do the following:

\begin{description}
\setlength{\itemindent}{.01in}
\item [Step~1]
Solve the $k$th order Moment-SOS relaxation pair
\reff{min<cw>:Lg[w]psd}-\reff{max:QMSOS:varpi}.
If \reff{min<cw>:Lg[w]psd} is infeasible,
stop and output that \reff{org:min<cy>:CPnd} is infeasible.
If \reff{max:QMSOS:varpi} is unbounded from above,
stop and output that \reff{org:max<ap>:COPnd} is unbounded above.
Otherwise, compute the optimizers $(w^{(k)}, z^{(k)})$,
$(\hat{p}^{(k)}, \eta^{(k)})$ for
\reff{min<cw>:Lg[w]psd}-\reff{max:QMSOS:varpi} respectively.

\item [Step~2]
For $t\in [d_0,k]$, check if the rank truncation
\be  \label{flat:rank}
 \rank \, M_t[z^{(k)}] \, = \, \rank \, M_{t-1}[z^{(k)}]
\ee
holds or not. If it does, then $w^{(k)}$ is a minimizer of \reff{org:min<cy>:CPnd}.
Moreover, if
\[
c^T w^{(k)} + \langle \hat{a}_0, \hat{p}^{(k)}\rangle + f_0^T\eta^{(k)} = 0,
\]
then $(\hat{p}^{(k)},\eta^{(k)})$ is a maximizer of \reff{max:COP:varpi},
or equivalently,
\[ (\phi^T( \varpi^{-1}( \hat{p}^{(k)} )), \, \eta^{(k)}) \]
is a maximizer of \reff{org:max<ap>:COPnd}.
Otherwise, let $k \coloneqq  k+1$ and go to Step~1.

\end{description}

\end{alg}

Algorithm~\ref{alg:linconc:CP} can be implemented with the software
\texttt{GloptiPoly 3} \cite{GloPol3} and \texttt{SeDuMi} \cite{sturm1999using}.
The \reff{min<cw>:Lg[w]psd} is a relaxation of \reff{min<cy>:CPnd:varpi},
which is equivalent to the CP tensor optimization \reff{org:min<cy>:CPnd}.
If \reff{min<cw>:Lg[w]psd} is infeasible, then
\reff{min<cy>:CPnd:varpi} is also infeasible.
Suppose $w^*$ is the minimizer of \reff{min<cy>:CPnd:varpi}.
If the rank condition \reff{flat:rank} is met,
then $\hat{a}(w^{(k)})  \in \mathscr{R}_d(\overline{\Dt})$
and it is feasible for \reff{min<cy>:CPnd:varpi}. For such a case, \reff{min<cw>:Lg[w]psd}
is a tight relaxation of \reff{min<cy>:CPnd:varpi}
and $w^{(k)}$ is a minimizer of \reff{min<cy>:CPnd:varpi}.
Moreover, if in addition the maximizer $(\hat{p}^{(k)},\eta^{(k)})$
of \reff{max:QMSOS:varpi} satisfies the equation
$
c^Tw^{(k)}  + \langle \hat{a}_0, \hat{p}^{(k)}\rangle + f_0^T\eta^{(k)} = 0,
$
then $(\hat{p}^{(k)},\eta^{(k)})$ is the maximizer of \reff{max:COP:varpi}.
The asymptotic convergence of Algorithm~\ref{alg:linconc:CP}
is shown as follows.

\begin{theorem}
Assume the vectors $\hat{a}_0, \ldots, \hat{a}_\ell$ are linearly independent.
Suppose \reff{min<cy>:CPnd:varpi} is feasible
and \reff{max:COP:varpi} has a feasible $\hat{p}$
that is positive on $\overline{\Dt}$.
Let $(w^{(k)}, z^{(k)})$ be a minimizer for the moment relaxation
\reff{min<cw>:Lg[w]psd} with the relaxation order $k$.
Then, the sequence $\{w^{(k)} \}_{k=d_0}^\infty$ is bounded and
each accumulation point of  $\{w^{(k)} \}_{k=d_0}^\infty$
is a minimizer for \reff{min<cy>:CPnd:varpi}.
\end{theorem}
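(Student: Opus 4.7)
The plan is to establish asymptotic convergence in three stages: boundedness of the sequence $\{w^{(k)}\}$, feasibility of any accumulation point for \reff{min<cy>:CPnd:varpi}, and optimality. The architecture is the standard one for Moment-SOS convergence (compare \cite[Theorem~5.3]{Nie14Atkmp}), but I will need to exploit the two hypotheses (linear independence of the $\hat{a}_i$'s and the strictly feasible dual point $\hat{p}$) at specific junctures.

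First, for boundedness of $\{w^{(k)}\}$, I would use the strictly dual feasible $\hat{p}$ as a test functional against the moment solution. Pick $\veps > 0$ so small that $\hat{p} - \veps$ is still positive on $\overline{\Dt}$; by Putinar's Positivstellensatz applied to the archimedean quadratic module $\qmod{\overline{\Dt}}$, we get $\hat{p} - \veps \in \qmod{\overline{\Dt}}_{2k_0}$ for some $k_0$. For $k \ge k_0$, the feasibility $z^{(k)} \in \mathscr{S}[\overline{\Dt}]_{2k}$ together with \reff{eq:dualQMnS} yields
\[
\langle \hat{p}, z^{(k)}|_d \rangle \;\ge\; \veps\, z^{(k)}_{0}.
\]
Next, substitute $z^{(k)}|_d = \hat{a}(w^{(k)})$ and apply the dual constraints $\langle \hat{a}_i, \hat{p}\rangle + f_i^T\eta = c_i$ to rewrite the left side as $\langle \hat{a}_0, \hat{p}\rangle + c^Tw^{(k)} - \eta^T(f(w^{(k)}) - f_0)$. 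Since $\eta \in K^*$ and $f(w^{(k)}) \in K$, the term $\eta^T f(w^{(k)}) \ge 0$, yielding $\veps z^{(k)}_0 \le \langle \hat{a}_0,\hat{p}\rangle + c^T w^{(k)} + \eta^T f_0$. Because any fixed feasible point $w^*$ of \reff{min<cy>:CPnd:varpi} is feasible for every relaxation, we have $c^T w^{(k)} \le c^T w^*$, giving a uniform upper bound on $z^{(k)}_0$. Then the standard PSD moment-matrix estimate (together with the localizer $L^{(k)}_{1-\|\bar{x}\|^2}[z^{(k)}]\succeq 0$, which forces $|z^{(k)}_\af| \le z^{(k)}_0$ for all $|\af|\le 2k$) bounds every entry of $z^{(k)}|_d$, hence bounds $\hat{a}(w^{(k)}) - \hat{a}_0$. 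Linear independence of $\hat{a}_0,\ldots,\hat{a}_\ell$ (equivalently, of $\hat{a}_1,\ldots,\hat{a}_\ell$) converts this to a bound on $w^{(k)}$.

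For feasibility of an accumulation point, pass to a subsequence with $w^{(k_j)} \to w^*$. By the same moment-matrix bound, for every fixed order $m$ the truncated sequences $\{z^{(k_j)}|_{2m}\}$ are bounded in finite-dimensional space; a diagonal extraction yields a subsequence along which $z^{(k_j)}|_{2m}$ converges for every $m$ to some $\bar{z}^{(m)}$, with consistency $\bar{z}^{(m+1)}|_{2m}=\bar{z}^{(m)}$. Since the PSD constraints defining $\mathscr{S}[\overline{\Dt}]_{2m}$ are closed, $\bar{z}^{(m)} \in \mathscr{S}[\overline{\Dt}]_{2m}$, and $\bar{z}^{(m)}|_d = \hat{a}(w^*)$ by continuity of $\hat{a}$. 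Therefore $\hat{a}(w^*) \in \mathscr{F}[\overline{\Dt}]_m$ for every $m \ge d_0$. Combining Lemma~\ref{lm:rel:CPnd:dhmg} with Theorem~\ref{thm:QMaprx:COCP} gives $\bigcap_m \mathscr{F}[\overline{\Dt}]_m = \mathscr{R}_d(\overline{\Dt})$, so $\hat{a}(w^*) \in \mathscr{R}_d(\overline{\Dt})$. Closedness of $K$ provides $f(w^*) \in K$. Thus $w^*$ is feasible for \reff{min<cy>:CPnd:varpi}.

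For optimality, denote the optimal value of \reff{min<cy>:CPnd:varpi} by $\theta^*$. Since the moment relaxation dominates the original problem, $c^Tw^{(k)} \le \theta^*$ for every $k$, so the limit $c^Tw^* \le \theta^*$. Feasibility of $w^*$ gives $c^Tw^* \ge \theta^*$, hence equality, so every accumulation point is a minimizer. The main obstacle is the boundedness argument in Stage~1: without the strict dual feasibility of $\hat{p}$ one has no uniform control over the zeroth moment $z^{(k)}_0$, and without linear independence of the $\hat{a}_i$'s the bound on $\hat{a}(w^{(k)})$ cannot be transferred to $w^{(k)}$; both hypotheses are thus essential, and the delicate point is combining the Positivstellensatz representation of $\hat{p}-\veps$ with the dual linear constraints to extract the one-sided bound $\veps z^{(k)}_0 \le \text{const} + c^Tw^*$.
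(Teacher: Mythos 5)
Your proposal is correct and follows essentially the same route as the paper: the strictly feasible dual point plus Putinar's Positivstellensatz bounds $(z^{(k)})_0$, the localizing/moment matrix constraints bound $z^{(k)}|_d$, linear independence of the $\hat{a}_i$ transfers this to $w^{(k)}$, and accumulation points are handled by the standard relaxation-sandwich argument (which the paper delegates to \cite[Theorem~4.3]{JNieLinearOptMoment} but you spell out via $\bigcap_m \mathscr{F}[\overline{\Dt}]_m = \mathscr{R}_d(\overline{\Dt})$). The only nitpick is your parenthetical claiming linear independence of $\hat{a}_0,\ldots,\hat{a}_\ell$ is "equivalent" to that of $\hat{a}_1,\ldots,\hat{a}_\ell$ — it is not, though the latter (weaker) condition is all the argument needs and is implied by the hypothesis.
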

\begin{proof}
Suppose $(\hat{p}_1, \eta_1)$ is a feasible pair of \reff{max:COP:varpi}
such that $\hat{p}_1 > \eps_0 $ on $\overline{\Dt}$,
for some $\eps_0 >0$.
First, we show that $\{ z^{(k)}|_d \}_{k=d_0}^\infty$ is a bounded sequence.
For each $k$, we have
\[
0 \leq \langle \hat{p}_1 - \eps_0, z^{(k)} \rangle  +
  f(w^{(k)})^T \eta_1   =
 \langle \hat{p}_1 , z^{(k)} \rangle +  f(w^{(k)})^T \eta_1
 - \eps_0 \langle 1, z^{(k)} \rangle ,
\]
 where $f(w)$ is given as in \reff{eq:awfw}.
The first equality constraint in \reff{min<cw>:Lg[w]psd} implies that
\[
\langle \hat{p}_1 , z^{(k)} \rangle  \, = \,
\langle \hat{p}_1, \hat{a}_0  \rangle  +
\sum_{i=1}^l (w^{(k)})_i \langle \hat{p}_1,   \hat{a}_i  \rangle .
\]
Also observe that
\[
f(w^{(k)})^T \eta_1 \, = \,  f_0^T \eta_1 +
\sum_{i=1}^l   (w^{(k)})_i f_i^T \eta_1 \, = \,
f_0^T \eta_1 +
\sum_{i=1}^l   (w^{(k)})_i (c_i - \langle \hat{p}_1, \hat{a}_i \rangle ).
\]
Therefore, we get that
\[
0 \le    c^T w^{(k)}  + \langle \hat{p}_1, \hat{a}_0\rangle + f_0^T \eta_1
 - \eps_0 \langle 1, z^{(k)} \rangle,
\]
\[
 \eps_0 \langle 1, z^{(k)} \rangle \le
c^T w^{(k)}  + \langle \hat{p}_1, \hat{a}_0 \rangle + f_0^T \eta_1  .
\]
Let $w^*$ be a feasible point of \reff{min<cy>:CPnd:varpi},
then $c^T w^{(k)} \le c^T w^*$,
since \reff{min<cw>:Lg[w]psd} is a relaxation of \reff{min<cy>:CPnd:varpi}.
Hence,
\[
(z^{(k)})_0 \leq \frac{c^T w^* + \langle \hat{p}_1, \hat{a}_0 \rangle + f_0^T \eta_1 }{\eps_0}.
\]
This implies the sequence $\{ (z^{(k)})_0 \}_{k=d_0}^\infty$ is bounded.
Moreover, the localizing matrix inequality
\[
L_{1-\| \bar{x} \|^2}^{(k)}[z^{(k)}] \succeq 0
\]
implies that
\[
\langle (1-\| \bar{x} \|^2) \bar{x}^{2\af},  z^{(k)}  \rangle \ge 0
\]
for every square $\bar{x}^{2\af}$ with
$|\af| = 0, 1, \ldots, k-1$.
This shows that the diagonals of $M_k[z^{(k)}]$
are bounded by a constant multiple of $(z^{(k)})_0$.
This shows that the sequence $\{z^{(k)}|_d\}_{k=d_0}^\infty$ is bounded.
Note $z^{(k)}$ satisfies the linear equality constraint
\[
\hat{a}_0 + \sum_{i=1}^\ell w_i^{(k)} \hat{a}_i = z^{(k)}|_d .
\]
Since $\hat{a}_0, \ldots, \hat{a}_\ell$ are linearly independent,
we also have that $\{w^{(k)}\}_{k=d_0}^\infty$ is a bounded sequence.

Let $z^{\infty}$ be an accumulation point of $\{z^{(k)}|_d\}$.
This also gives an accumulation point $w^{\infty}$ of $\{ w^{(k)} \}$.
One can show that $(w^{\infty}, z^{\infty})$
is a minimizer of \reff{min<cy>:CPnd:varpi}.
The proof is the same as for the one in
\cite[Theorem~4.3]{JNieLinearOptMoment}.
We refer to \cite{JNieLinearOptMoment} for more details.
\end{proof}

Under some conditions,
Algorithm~\ref{alg:linconc:CP} terminates within finitely many loops.
We need the following assumption.

\begin{assp}
\label{as:finKKT}
The optimization \reff{max:COP:varpi} has a maximizer pair $(\hat{p}^*,\eta^*)$
such that $\hat{p}^* \in  \qmod{\bar{x}, 1-e^T \bar{x}, 1-\|\bar{x}\|^2}$
and the optimization problem
\be  \label{eq:f-G-F}
\left\{ \baray{rl}
\min\limits_{\brx\in\re^{\brn}} & \hat{p}^*(\brx)\\
\st &  x_1\ge0, \ldots, x_{\brn}\ge0, \, 1-e^T\brx\ge 0 .
\earay \right.
\ee
has finitely many critical points on which
the objective value equals $0$.
\end{assp}

The following theorem shows the finite termination of
Algorithm~\ref{alg:linconc:CP}.

\begin{theorem}\label{tm:linfinconv}
Let $w^*$ and $(\hat{p}^*, \eta^*)$ be optimizers for
\reff{min<cy>:CPnd:varpi} and \reff{max:COP:varpi} respectively.
Suppose \reff{min<cy>:CPnd:varpi} and \reff{max:COP:varpi}
have the same optimal value. If Assumption~\ref{as:finKKT} holds, then
Algorithm~\ref{alg:linconc:CP} terminates within finitely many steps
and returns optimizers for \reff{min<cy>:CPnd:varpi} and \reff{max:COP:varpi}.
\end{theorem}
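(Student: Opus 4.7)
The plan is to show that under Assumption~\ref{as:finKKT} the SOS relaxation \reff{max:QMSOS:varpi} becomes exact at some finite order $k_0$, that strong duality between the Moment-SOS pair then holds from that order on, and that a complementary-slackness computation forces $z^{(k)}|_d$ to have support contained in the finite critical set of $\hat{p}^*$ on $\overline{\Dt}$. Once the support is finite, flat truncation \reff{flat:rank} must eventually be detected, and Step~2 of Algorithm~\ref{alg:linconc:CP} terminates.

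First I would use the identity for $1-\|\bar{x}\|^2$ displayed in the proof of Theorem~\ref{tm:finconv_cp} to get $\qmod{\bar{x}, 1-e^T\bar{x}, 1-\|\bar{x}\|^2} = \qmod{\overline{\Dt}}$, so Assumption~\ref{as:finKKT} supplies some $k_0 \geq d_0$ with $\hat{p}^* \in \qmod{\overline{\Dt}}_{2k_0}$. For every $k \geq k_0$, the pair $(\hat{p}^*,\eta^*)$ is then feasible for \reff{max:QMSOS:varpi}, giving an SOS value at least $-\langle \hat{a}_0,\hat{p}^*\rangle - f_0^T\eta^* = c^Tw^*$. Combining this with the fact that \reff{min<cw>:Lg[w]psd} relaxes \reff{min<cy>:CPnd:varpi} and with weak Moment-SOS duality, both relaxations must attain the common value $c^T w^*$ for all $k \geq k_0$, and $(\hat{p}^*,\eta^*)$ is an optimizer of \reff{max:QMSOS:varpi}.

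The central step is a complementary-slackness computation. Using $\hat{a}(w^{(k)}) = z^{(k)}|_d$, the dual equalities $\langle \hat{a}_i, \hat{p}^*\rangle + f_i^T\eta^* = c_i$, and the strong-duality identity $c^Tw^{(k)} + \langle \hat{a}_0,\hat{p}^*\rangle + f_0^T\eta^* = 0$, a short manipulation yields
\[
\langle \hat{p}^*, z^{(k)}|_d \rangle \,=\, -\, f(w^{(k)})^T \eta^* \,\le\, 0,
\]
since $f(w^{(k)}) \in K$ and $\eta^* \in K^*$. On the other hand, $\hat{p}^* \in \qmod{\overline{\Dt}}_{2k}$ and $z^{(k)} \in \mathscr{S}[\overline{\Dt}]_{2k}$ give $\langle \hat{p}^*, z^{(k)}|_d\rangle \ge 0$, so equality holds. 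Writing $\hat{p}^* = \sigma_0 + x_1\sigma_1 + \cdots + x_{\bar{n}}\sigma_{\bar{n}} + (1-e^T\bar{x})\sigma_{\bar{n}+1}$ with $\sigma_j \in \Sigma[\bar{x}]$, each nonnegative summand $\langle g_j\sigma_j, z^{(k)}\rangle$ must vanish. This is precisely the complementary-slackness condition linking $z^{(k)}$ to the set $\{\bar{x}\in\overline{\Dt} : \hat{p}^*(\bar{x})=0\}$.

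Finally, since the minimum value of $\hat{p}^*$ on $\overline{\Dt}$ equals $0$ and, by Assumption~\ref{as:finKKT}, is achieved at only finitely many critical points, the truncated moment sequence $z^{(k)}|_{2t}$ admits a finite-atomic representing measure supported on that critical set for some $t \leq k$. The finite-convergence machinery of \cite[Theorem~5.5]{Nie14Atkmp}, applied in the same manner as in the proof of Theorem~\ref{tm:finconv_cp}, then guarantees that the flat-truncation test \reff{flat:rank} is detected at some finite order. The recovered $w^{(k)}$ is a minimizer of \reff{min<cy>:CPnd:varpi}, and the duality identity checked in Step~2 certifies $(\hat{p}^{(k)},\eta^{(k)})$ as a maximizer of \reff{max:COP:varpi}. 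The main obstacle I anticipate is the bookkeeping in the complementary-slackness identity above, which must simultaneously manage the moment block and the auxiliary conic block $K$; the key insight making it work is that strong duality at order $k \geq k_0$ splits into the two nonnegative contributions $\langle \hat{p}^*, z^{(k)}|_d\rangle$ and $f(w^{(k)})^T\eta^*$, both of which must vanish.
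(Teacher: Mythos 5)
Your overall route is the same as the paper's: use the quadratic-module identity to turn Assumption~\ref{as:finKKT} into a degree-$2k_0$ SOS certificate for $\hat{p}^*$, deduce exactness and strong duality of the relaxation pair \reff{min<cw>:Lg[w]psd}--\reff{max:QMSOS:varpi} for $k\ge k_0$, derive the complementary slackness $\langle \hat{p}^*, z^{(k)}\rangle = f(w^{(k)})^T\eta^* = 0$, and conclude via flat-truncation theory applied to the auxiliary problem $\min\{\hat p^*(\bar x): \bar x\in\overline{\Dt}\}$. Your complementary-slackness computation is correct and is in fact more explicit than the paper's, which carries out the analogous identity only for the true optimizer $w^*$ and then asserts that the scaled $z^{(k)}$ minimizes the auxiliary moment relaxation.

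There is, however, a gap in your last paragraph. From $\langle \hat p^*, z^{(k)}\rangle=0$ you assert that $z^{(k)}|_{2t}$ "admits a finite-atomic representing measure supported on that critical set for some $t\le k$." That does not follow: at a fixed finite order $k$, a feasible $z^{(k)}$ of \reff{min<cw>:Lg[w]psd} need not admit any representing measure at all --- the existence of such a measure is exactly what the flat-truncation condition \reff{flat:rank} certifies, so as written you are assuming the conclusion. The correct order of argument is: (i) if $(z^{(k)})_0=0$, then $M_k[z^{(k)}]\succeq 0$ together with the localizing constraints forces $z^{(k)}=0$, and \reff{flat:rank} holds trivially (a case you omit); (ii) otherwise normalize so that $(z^{(k)})_0=1$ and verify that the normalized $z^{(k)}$ is feasible for the moment relaxation \reff{mom:minc*} of the auxiliary problem and, by your complementary slackness, attains its optimal value $0$; (iii) since Assumption~\ref{as:finKKT} yields both finite convergence of the SOS hierarchy \reff{sos:minc*} and finitely many critical points with objective value $0$, apply \cite[Theorem~2.2]{Nie13FlatTrunc} to conclude that every minimizer of \reff{mom:minc*} satisfies \reff{flat:rank} once $k$ is large enough. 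Note also that the relevant tool here is \cite{Nie13FlatTrunc} (flat truncation for moment relaxations of polynomial optimization), not \cite[Theorem~5.5]{Nie14Atkmp}, which concerns the truncated moment problem underlying Algorithm~\ref{sdpalg:CPnd}.
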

\begin{proof}
By the zero duality gap assumption,
\[
0 = c^T w^* + \langle \hat{a}_0, \hat{p}^* \rangle +
f_0^T \eta^* = \langle \hat{a}(w^*), \hat{p}^* \rangle + f(w^*)^T \eta^*,
\]
where $\hat{a}(w)$ is given as in \reff{eq:hataw}
and $f(w)$ is as in \reff{eq:awfw}.
In the above, the second equality is from
the equality constraints in \reff{max:COP:varpi}.
This implies that
\[
\langle \hat{a}(w^*), \hat{p}^* \rangle = f(w^*)^T \eta^* = 0.
\]
Let $\mu^*$ be a representing measure for $\hat{a}(w^*)$,
which is supported in the set $\overline{\Dt}$. Then, the polynomial
$\hat{p}^*$ vanishes identically on the support $\supp{\mu^*}$.
Each point of $\supp{\mu^*}$ is a minimizer of \reff{eq:f-G-F},
whose minimum value is $0$. Also note that
\[
\qmod{\bar{x}, 1-e^T \bar{x}, 1-\|\bar{x}\|^2}
\, =  \,  \qmod{\bar{x}, 1-e^T \bar{x} }.
\]
The $k$th order SOS relaxation for \reff{eq:f-G-F} is
\be \label{sos:minc*}
\left\{ \baray{rl}
\gamma_k := \max & \gamma  \\
\st & \hat{p}^*-\gamma \in  \qmod{\bar{x}, 1-e^T \bar{x} }_{2k} .
\earay \right.
\ee
The Assumption~\ref{as:finKKT} implies that the hierarchy
of \reff{sos:minc*} has finite convergence, i.e.,
$\gamma_k=0$ for all $k\geq N_1$, for some order $N_1$.
The relaxation \reff{sos:minc*}
achieves its optimal value for all $k$ high enough,
by Assumption~\ref{as:finKKT}.
The dual optimization problem of \reff{sos:minc*}
is the moment relaxation
\be \label{mom:minc*}
\left\{\baray{rl}
\min\limits_{ z }  & \langle \hat{p}^*, z \rangle \\
\st & L_{g_i}^{(k)}[z] \succeq 0, i=1,\ldots, n, \\
& z_0 = 1,  M_k[z] \succeq 0, z \in \re^{ \N_{2k}^{\bar{n}}  }.
\earay  \right.
\ee
In the above, $(g_1, \ldots, g_n)  =
(x_1, \ldots, x_{n-1}, 1 -e^T\bar{x})$.
By Assumption~\ref{as:finKKT}, the optimization problem
\reff{eq:f-G-F} has only finitely many critical points with the objective value $0$.
So the Assumption~2.1 in \cite{Nie13FlatTrunc}
for the problem \reff{eq:f-G-F} is met.

Suppose $z^{(k)}$ is optimal for \reff{min<cw>:Lg[w]psd}.
If $(z^{(k)})_0 = 0$, then $z^{(k)} =0$,
because of the constraints $M_k[z^{(k)}] \succeq 0$
and each $L_{g_i}^{(k)}[z^{(k)}] \succeq 0$
(see Lemma~5.7] of \cite{Lau09}).
This means that $z^{(k)}$ is identically zero and
\reff{flat:rank} must hold.
If $(z^{(k)})_0 >0$, then we can scale $z^{(k)}$
such that $(w^{(k)})_0=1$.
Furthermore, the scaled $z^{(k)}$ is a minimizer of \reff{mom:minc*}.
By Theorem~2.2 of \cite{Nie13FlatTrunc},
the terminating criterion \reff{flat:rank} must hold
when $k$ is sufficiently large.
\end{proof}

\section{Numerical experiments}
\label{sc:ne}

This section gives numerical experiments for using the dehomogenization method,
especially for Algorithms \ref{sdpalg:CPnd} and \ref{alg:linconc:CP}.
The numerical examples are solved with \texttt{MATLAB}
software {\tt GloptiPoly3} \cite{GloPol3}
and {\tt SeDuMi} \cite{sturm1999using}.
The computation is implemented in MATLAB R2018a,
in a Laptop with CPU 8th Generation Intel® Core™ i5-8250U and RAM 16 GB.

\subsection{CP tensors}

First, we show how to use the dehomogenization method to check memberships of CP tensors.
Given a symmetric tensor $\mathcal{A}\in\ttS^d(\re^n)$, we first compute the htms
$y=\phi(\mathcal{A})$ and then use Algorithm~\ref{sdpalg:CPnd}
to test whether or not $y\in\mc{CP}_{n,d}$. It is worth to note that
$y\in \mc{CP}_{n,d}$ is equivalent to that $\mc{A}\in \mc{CP}_n^{\otimes d}$.
If $\lambda_1[u_1]_d^{\hm}+\cdots+ \lambda_r[u_r]_d^{\hm}$
is a computed CP decomposition for $y$,
the decomposition accuracy is measured as the Euclidean norm
\[
\| \lambda_1[u_1]_d^{\hm}+\cdots+ \lambda_r[u_r]_d^{\hm} - y\|.
\]
We begin with some explicit examples.

\begin{example}
\label{ex:CPmat}
Consider the symmetric matrices
\[
A = \bbm
6   &  4   &  1  &   2  &   2\\
4   &  5   &  0   &  1   &  3\\
1   &  0   &  3   &  1   &  2\\
2   &  1   &  1   &  1   &  1\\
2   &  3   &  2   &  1   &  5
\ebm,\quad
B = \bbm
2 & 1 & 0 & 0 & 0\\
1 & 2 & 1 & 0 & 0\\
0 & 1 & 2 & 2 & 2\\
0 & 0 & 2 & 3 & 3\\
0 & 0 & 2 & 3 & 4
\ebm,\quad
C = \bbm
1 & 1 & 2 & 3 & 4\\
1 & 1 & 3 & 2 & 3\\
2 & 3 & 3 & 3 & 3\\
3 & 2 & 3 & 1 & 4\\
4 & 3 & 3 & 4 & 5
\ebm.
\]
Denote $y_A=\phi(A), y_B = \phi(B), y_C = \phi(C)$. It is easy to compute that
\[
\begin{array}{l}
\varpi^{-T}(y_A) = (54, 15, 13, 7, 6, 6, 4, 1, 2, 5, 0, 1, 3, 1, 1),\\
\varpi^{-T}(y_B) = (31, 3, 4, 7, 8, 2, 1, 0, 0, 2, 1, 0, 2, 2, 3),\\
\varpi^{-T}(y_C) = (67, 11, 10, 14, 13, 1, 1, 2, 3, 1, 3, 2, 3, 3, 1).
\end{array}
\]
For matrix $A$, Algorithm~\ref{sdpalg:CPnd} terminates at relaxation
$k=3$ within $1.03$ seconds.
The $A$ is CP with the output decomposition
$A = \sum_{i=1}^6\lambda_i (u_i)^{\otimes 2}$ (accuracy is $1.3879\cdot 10^{-6}$),
\[  \begin{array}{ll}
u_1 = (0, 0.3883, 0, 0, 0.6117)), & \lambda_1 = 8.4610;\\
u_2 = (0, 0, 0.5070, 0, 0.4930), & \lambda_2 = 3.1782;\\
u_3 = (0.4701, 0, 0, 0.3270, 0.2029), & \lambda_3 = 2.4437;\\
u_4 = (0, 0, 0.5398, 0.2150, 0.2452), & \lambda_4 = 4.5418;\\
u_5 = (0.3405, 0, 0.2927, 0.1610, 0.2058), & \lambda_5 = 10.0351;\\
u_6 = (0.4118, 0.3834, 0, 0.1029, 0.1019), & \lambda_6 = 25.3401.
\end{array}  \]
For matrix $B$, Algorithm~\ref{sdpalg:CPnd} terminates at relaxation $k=2$ within $0.52$ second.
The $B$ is CP with the output decomposition
$A = \sum_{i=1}^4\lambda_i(u_i)^{\otimes 2}$ (accuracy is $1.9780\cdot 10^{-6}$),
\[ \begin{array}{ll}
u_1 = (0, 0, 0, 0, 1), & \lambda_1 = 1;\\
u_2 = \frac{1}{8}(2, 0, 0, 3, 3), & \lambda_2 = \frac{64}{3};\\
u_3 = \frac{1}{3}(2, 1, 0, 0, 0), & \lambda_3 = \frac{9}{2};\\
u_4 = \frac{1}{5}(0, 3, 0, 2, 0), & \lambda_4 = \frac{25}{6}.
\end{array} \]
For matrix $C$, Algorithm~\ref{sdpalg:CPnd} terminates at relaxation $k=2$ within $0.11$ second.
Since the moment relaxation is infeasible, $C$ is not CP.
\end{example}

Then we apply Algorithm \ref{sdpalg:CPnd} to tensors with larger dimensions and higher orders.
\begin{example}
\label{ex:CPmemtensor}
(i) Consider the symmetric tensor
\[ \mc{A} = 3\bbm 0\\1\\0\ebm^{\otimes 6}+\bbm -1\\3\\1\ebm^{\otimes 6}+
3\bbm 1\\2\\2\ebm^{\otimes 6}+2\bbm 2\\3\\2\ebm^{\otimes 6}. \]
By applying Algorithm~\ref{sdpalg:CPnd}, we detect that $\mc{A}$
is not CP at the relaxation order $k=4$. It took $0.22$ second.

(ii) Consider the symmetric tensor
\[ \mc{A} = \frac{1}{100}\Big(
7\bbm 0\\1\\1\\0\ebm^{\otimes 4}+5\bbm 0\\2\\1\\0\ebm^{\otimes 4}+
6\bbm 0\\0\\2\\2\ebm^{\otimes 4}
+7\bbm 1\\2\\1\\1\ebm^{\otimes 4}+6\bbm 1\\2\\0\\0\ebm^{\otimes 4} \Big).
\]
By applying Algorithm~\ref{sdpalg:CPnd}, we detect that $\mc{A}$
is CP at the relaxation order $k=3$.
It took $0.49$ second. The output CP decomposition is
$\mc{A} = \sum_{i=1}^5\lambda_i (u_i)^{\otimes 4}$ (accuracy is $4.1353\cdot10^{-6}$),
\[ \begin{array}{ll}
u_1 = \frac{1}{2}(0, 0, 1, 1), & \lambda_1 = 15.36;\\
u_2 = \frac{1}{2}(0, 1, 1, 0), & \lambda_2 = 1.12;\\
u_3 = \frac{1}{5}(1, 2, 1, 1), & \lambda_3 = 43.75;\\
u_4 = \frac{1}{3}(0, 2, 1, 0), & \lambda_4 = 4.05;\\
u_5 = \frac{1}{3}(1, 2, 0, 0), & \lambda_5 = 4.86.
\end{array}
 \]
\end{example}

\begin{example}
\label{ex:CPtensor}
(i)
Consider the symmetric tensor $\mc{A}\in \ttS^3(\re^5)$ such that
\[ \begin{array}{cl}
\phi(\mc{A})\  = &( 4, 2, 3, 1, 4,  2, 2, 0, 2, 3, 0, 3, 1, 1, 4, 5, 4, 3, \\
 & \qquad \qquad \qquad  3, 4, 2, 3, 3, 1, 3,  6, 2, 4, 2, 1, 4, 6, 4, 4, 7).
\end{array}\]
By applying Algorithm~\ref{sdpalg:CPnd}, we detect that $\mc{A}$
is CP at the relaxation order $k=3$.
It took $2.22$ seconds.
The output CP decomposition is $\mc{A} =
\sum_{i=1}^7\lambda_i (u_i)^{\otimes 3}$ (accuracy is $4.9617\cdot10^{-6}$) with
\[ \begin{array}{ll}
u_1 = \frac{1}{3}(1, 0, 1, 0, 1), & \lambda_1 = 27;\\
u_2 = \frac{1}{2}(0, 0, 0, 1, 1), & \lambda_2 = 16;\\
u_3 = \frac{1}{3}(1, 0, 0, 1, 1), & \lambda_3 = 27;\\
u_4 = \frac{1}{4}(1, 1, 1, 0, 1), & \lambda_4 = 128;\\
u_5 = \frac{1}{4}(0, 1, 1, 1, 1), & \lambda_5 = 64;\\
u_6 = (0, 0.2165, 0.5669, 0.2165, 0), & \lambda_6 = 10.3974;\\
u_7 = (0, 0.4198, 0.1604, 0.4198, 0), & \lambda_7 =  25.6026.\\
\end{array}
\]

{(ii)
Consider the symmetric tensor $\mc{A}\in \ttS^6(\re^4)$ such that
\[ \begin{array}{cl}
\phi(\mc{A})\  = &(
3, 3, 4, 3, 3, 4, 3, 6, 4, 5, 3, 4, 3, 6, 4, 5, 10, 6, 6, 9, 3,4, 3,6, 4,\\
&\quad 5, 10, 6, 6, 9, 18, 10, 8, 10, 17, 3, 4, 3, 6, 4, 5, 10, 6, 6, 9, 18, 10,\\
&\quad \quad 8, 10, 17, 34, 18, 12, 12, 18, 33, 9, 8, 6, 16, 5, 8, 38,7, 7, 12, 100, \\
&\quad \quad \quad 11, 9, 11, 20, 278, 19, 13, 13, 19, 36, 797, 36, 22, 18, 22, 36, 69).
\end{array}\]
By applying Algorithm~\ref{sdpalg:CPnd}, we detect that $\mc{A}$
is CP at the relaxation order $k=3$.
It took $1.48$ seconds.
The output CP decomposition is $\mc{A} =
\sum_{i=1}^7\lambda_i (u_i)^{\otimes 6}$ (accuracy is $9.1718\cdot10^{-8}$) with
\[ \begin{array}{ll}
u_1 = \frac{1}{4}(0, 1, 3, 0), & \lambda_1 = 4096;\\
u_2 = \frac{1}{2}(0, 0, 1, 1), & \lambda_2 = 64;\\
u_3 = \frac{1}{5}(1, 1, 2, 1), & \lambda_3 = 15625;\\
u_4 = \frac{1}{3}(0, 1, 1, 1), & \lambda_4 = 729;\\
u_5 = \frac{1}{3}(1, 1, 1, 0), & \lambda_5 = 729;\\
u_6 = \frac{1}{4}(1, 1, 1, 2), & \lambda_6 =15625;\\
u_7 = (0, 1, 0, 0), & \lambda_7 =  2;\\
u_8 = \frac{1}{2}(0, 1, 0, 1), & \lambda_8=128.
\end{array}
\]
}
\end{example}

{
\begin{example}\label{ex:large}
Consider the symmetric tensor
\[\begin{array}{l} \mc{A} = \frac{1}{100}\Big(
\bbm 0\\1\\0\\1\ebm^{\otimes 10}
+\bbm 1\\1\\2\\1\ebm^{\otimes 10}
+\bbm 0\\1\\1\\1\ebm^{\otimes 10}
+\bbm 1\\2\\1\\0\ebm^{\otimes 10}
+\bbm 0\\1\\1\\0\ebm^{\otimes 10} \\
\qquad\qquad\quad
+\bbm 1\\1\\0\\1\ebm^{\otimes 10}
+\bbm 0\\1\\0\\1\ebm^{\otimes 10}
+\bbm 2\\1\\0\\2\ebm^{\otimes 10}
+\bbm 1\\0\\1\\1\ebm^{\otimes 10}
+\bbm 1\\1\\1\\2\ebm^{\otimes 10} \Big).
\end{array} \]
By applying Algorithm~\ref{sdpalg:CPnd}, we detect that
$\mc{A}$ is CP at the relaxation order $k=6$.
It took $3.77$ seconds. The output CP decomposition is
$\mc{A} = \sum_{i=1}^9\lambda_i (u_i)^{\otimes 10}$
(accuracy is $1.0654\cdot 10^{-9}$),
\[ \begin{array}{ll}
u_1 = \frac{1}{5}(2, 1, 0, 2), & \lambda_1 = 97656.2507;\\
u_2 = \frac{1}{3}(1, 0, 1, 1), & \lambda_2 = 590.4877;\\
u_3 = \frac{1}{3}(1, 1, 0, 1), & \lambda_3 = 590.4913;\\
u_4 = \frac{1}{5}(1, 1, 1, 2), & \lambda_4 = 97656.25;\\
u_5 = \frac{1}{2}(0, 1, 0, 1), & \lambda_5 = 20.4827;\\
u_6 = \frac{1}{3}(0, 1, 1, 1), & \lambda_6 = 590.4894;\\
u_7 = \frac{1}{5}(1, 1, 2, 1), & \lambda_7 = 97656.2485;\\
u_8 = \frac{1}{4}(1, 2, 1, 0), & \lambda_8 = 10485.7611;\\
u_9 = \frac{1}{2}(0, 1, 1, 0), & \lambda_9 = 10.2293.\\
\end{array}
 \]
\end{example}
In Table~\ref{tab:comparison},
we compare numerical performance of our dehomogenization approach, i.e., the relaxation (\ref{eq:Dmom:CP_nd}), with the traditional approach, i.e.,
the relaxation (\ref{eq:CPnondehom}).
In the table, the first column represents examples in this subsection,
and \ref{ex:CPmat}(A), \ref{ex:CPmat}(B), \ref{ex:CPmat}(C) are the CP matrix problems in Example~\ref{ex:CPmat} given by matrices $A$, $B$ and $C$ respectively.
Time consumption in the table is measured in seconds.
The order represents the relaxation order for getting the accuracy.
It is clear from the table that the dehomogenization approach performs
better than the traditional one.
\begin{table}[h]
\begin{tabular}{l|r|c|c||r|c|c}
   \hline
   &\multicolumn{3}{|c||}{With Dehomogenization}  &  \multicolumn{3}{c}{No Dehomogenization}   \\
   &\multicolumn{3}{|c||}{Relaxation (\ref{eq:Dmom:CP_nd})}    &  \multicolumn{3}{c}{Relaxation (\ref{eq:CPnondehom})}   \\ \hline
   Example & time  & accuracy & order & time & accuracy & order\\ \hline
   \ref{ex:CPmat}(A) & 1.03 & $1.38\cdot 10^{-6}$ & 3 &  2.77 & $1.71\cdot 10^{-6}$ & 3\\
   \ref{ex:CPmat}(B) & 0.52 & $1.97\cdot 10^{-6}$ & 2&  0.39 & $2.05\cdot 10^{-6}$ & 2\\
   \ref{ex:CPmat}(C) & 0.11 & not CP & 2 &  0.14 & not CP & 2\\
   \ref{ex:CPmemtensor} (i) & 0.22 & not CP & 4&  0.58 & not CP & 4\\
   \ref{ex:CPmemtensor}(ii) & 0.49 & $4.13\cdot 10^{-6}$ & 3& 1.05 & $1.54\cdot 10^{-6}$ & 3\\
   \ref{ex:CPtensor} (i) & 2.22 & $4.96\cdot 10^{-6}$ & 3 & 2.44 & $5.51\cdot 10^{-6}$ & 3\\
   \ref{ex:CPtensor}(ii) & 1.48 & $9.17\cdot 10^{-8}$ & 3 & 2.14 & $2.10\cdot 10^{-5}$& 4 \\
   \ref{ex:large} & 3.77 & $1.06\cdot 10^{-9}$ & 6 &  36.69 & $5.54\cdot 10^{-6}$ & 6\\ \hline
\end{tabular}
\caption{Comparison between relaxations (\ref{eq:Dmom:CP_nd}) and (\ref{eq:CPnondehom})}\label{tab:comparison}
\end{table}
}

\subsection{CP tensor approximations}
\label{ssc:CPapp}

For a given tensor $\mc{C}$, its best CP tensor approximation
is an optimizer of
\be
\label{opt:CPappro}
\left\{
\begin{array}{cl}
\min & \|\mc{X}-\mc{C}\|\\
\st & \mc{X}\in \mc{CP}_n^{\otimes d},
\end{array}
\right.
\ee
where $\|\cdot \|$ is the Hilbert-Schmidt norm given as in \reff{HSnm:||A||}.
The optimization \reff{opt:CPappro} is called the CP tensor approximation problem.
For the special case that $d=2$, it is reduced to be a CP-matrix approximation problem
(see the work \cite{FanZhou16CPmat,SpnDur14CPmat}).

The CP tensor approximation problem is equivalent to
a linear conic optimization problem with the CP tensor cone.
For each $\alpha\in \overline{\N}_d^n$, denote by $\mc{E}_{\alpha}\in \ttS^d(\re^n)$
the basis symmetric tensor whose corresponding tms
has zero entries except the $\alpha$th entry being $1$.
Then, the optimization \reff{opt:CPappro} is equivalent to
\begin{equation}
\label{opt:CPappro:lccp}
\left\{
\begin{array}{cl}
\min & w_0\\
\st & \sum\limits_{ \alpha \in \overline{\N}_d^n } w_{\alpha}
   \mc{E}_{\alpha}\in \mc{CP}_n^{\otimes d},\\
& \big\|\mc{C} - \sum\limits_{\alpha\in \bar{\N}_d^n}
   w_{\alpha} \mc{E}_{\alpha}  \big\|\le w_0,\\
& w = (w_{\alpha})\in \re^{ \overline{\N}_d^n },\, w_0 \in \re,
\end{array}
\right.
\end{equation}
which is in form of \reff{org:min<cy>:CPnd}.
The optimization \reff{opt:CPappro:lccp}
can be solved by Algorithm \ref{alg:linconc:CP}. Suppose it has the optimizer $(w_0^*, w^*, z^*)$.
Then the best CP-tensor approximation of $\mc{C}$ is
$
\sum_{\alpha \in \overline{\N}_d^n } w_{\alpha}^*\mc{E}_{\alpha}.
$
The following is an example to show the efficiency of Algorithm~\ref{alg:linconc:CP}
for solving CP tensor approximations.

\begin{example}
Consider the symmetric matrix
\[ C = \left[\begin{array}{rrrrr}
    1.0 & 2.0 &   1.5 & 0.0  &  2.5\\
    2.0 & 0.0 &  -1.0 & 2.0 & -2.5\\
    1.5 & -1.0 & -4.0 & 3.0 &  4.5\\
   0.0  &  2.0 & 3.0 & -2.0 & 1.0 \\
    2.5 & -2.5 & 4.5 & 1.0 & 0.0
    \end{array}\right]. \]
We use Algorithm \ref{alg:linconc:CP} to compute its best CP matrix approximation.
The algorithm terminates at $k=2$ with $0.22$ second. We solve for $w_0^*=9.6532$ and
\[\begin{array}{cl}
w^* = & (1.9059,\,0.9854,\,1.2192,\,0.9893,\,1.6969,\,1.2901,\,0.0000,\\
& \quad 0.4209,\,0.0000,\,1.2889,\,0.7060,\,1.7939,\,0.5240,\,0.9826,\,2.4969).
\end{array}
\]
The output best CP matrix approximation of $C$ is
\[ \bbm
1.9059  &  0.9854  &  1.2192 &   0.9893   & 1.6969\\
    0.9854  &  1.2901  &  0.0000  &  0.4209   & 0.0000\\
    1.2192  &  0.0000  &  1.2889  &  0.7060  & 1.7939\\
    0.9893  &  0.4209  &  0.7060  &  0.5240  &  0.9826\\
    1.6969 &   0.0000  &  1.7939  &  0.9826 &   2.4969
\ebm. \]
It can be decomposed as
\[
19.4584\bbm 0.2434\\0.0000\\0.2574\\0.1410
    \\0.3582\ebm^{\otimes 2}+
5.6356\bbm 0.3654\\0.4785\\0.0000\\0.1561
    \\0.0000\ebm^{\otimes 2}.
\]
\end{example}

\begin{example}
Consider the symmetric tensor $\mc{A}\in\ttS^3(\re^4)$ such that
\[ \mc{A}_{:,:,1} = \left[\begin{array}{rrrr}
  3  &   3  &   1  &  -3\\
  3  &   3  &  -1  &  -1\\
  1  &  -1  &   3  &   5\\
 -3  &  -1  &   5  &   3\end{array}\right],\quad
   \mc{A}_{:,:,2} = \left[\begin{array}{rrrr}
  3  &   3  &  -1  &  -1\\
  3  &   1  &   2  &   1\\
  -1  &   2   &  0  &   0\\
  -1  &   1   &  0  &   1\end{array}\right],\]
\[ \mc{A}_{:,:,3} = \left[\begin{array}{rrrr}
  1  &  -1  &   3  &   5\\
 -1  &   2  &   0  &   0\\
  3  &   0  &   2  &  -1\\
  5  &   0  &  -1  &   3\end{array}\right],\quad
   \mc{A}_{:,:,4} = \left[\begin{array}{rrrr}
  -3  &  -1  &   5  &   3\\
  -1  &   1  &   0  &   1\\
   5  &   0  &  -1  &   3\\
   3  &   1  &   3  &  -1\end{array}\right].\]
Here each $\mc{A}_{:,:,i}$ denotes the subtensor of $\mc{A}$ with labels
$(i_1,i_2,i)$ for all $1\le i_1,\, i_2\le 4$.
We use Algorithm \ref{alg:linconc:CP} to compute the best CP tensor approximation of $\mc{A}$.
The algorithm terminates at $k=3$ with $0.51$ second.
We solve for $w_0^*=14.2682$ and
\[\begin{array}{cl}
w^* =  & (4.1931,\,2.6035,\,1.5629,\,1.3894,\,2.3451,\,0.0293,\\
& \quad 0.0617,\,2.0098,\, 1.7390,\,1.6801,\,3.0183,\,0.6204,\,0.4050,\\
& \quad \quad 0.4299,\,0.1467,\,0.3084,\,2.9246,\,2.1886,\,2.1433,\,2.1677) .
\end{array}
\]
The output best CP tensor approximation of $\mc{A}$ is
\[
46.9502\bbm 0.2917\\0.0000\\0.3629
    \\0.3453 \ebm^{\otimes 3}+
6.2675\bbm 0.0669\\0.4393\\0.1592\\0.3346\ebm^{\otimes 3}+
3.1749\bbm 0.0000\\0.5429\\
    0.4571\\0.0000\ebm^{\otimes 3}
\]
\[+
1.3208\bbm 0.3566\\0.0000\\0.6434\\
    0.0000\ebm^{\otimes 3}+
19.5098\bbm 0.5337\\0.4663\\0.0000\\
    0.0000\ebm^{\otimes 3}.
\]
\end{example}

\subsection{CP tensor completions}

Let $\mc{I}$ be a nonempty label set for symmetric tensors
such that if $(i_1,\ldots, i_d)\in \mc{I}$
then all its permutations are also in $\mc{I}$.
Suppose $\mc{C}\in\ttS^d(\re^n)$ is a partially given symmetric tensor
such that  the entries $\mc{C}_{\alpha}\, (\alpha\not\in \mathcal{I})$ are given.
We look for the unknown entries $\mc{C}_{\alpha}\, (\alpha\in \mathcal{I})$
such that $\mc{C}$ is a CP tensor.
Let $\mc{C}_0$ be the symmetric tensor such that its entries are zeros
except $(\mc{C}_0)_{\alpha} = \mc{C}_{\alpha}$ for $\alpha \not\in \mathcal{I}$.
Consider the optimization problem
\be  \label{opt:CPcomple}
\left\{
\begin{array}{cl}
\min & \sum\limits_{\alpha\in \mc{I}}   w_{\alpha} \\
\st & \mc{C}_0 +  \sum\limits_{\alpha\in \mc{I}}
w_{\alpha}\mc{E}_{\alpha}\in \mc{CP}_{n}^{\otimes d},
\end{array}
\right. \ee
where $\mc{E}_{\alpha}$ denotes the basis symmetric tensor
whose corresponding tms has all zero entries except the $\alpha$th {entry being} {$1$}.
The \reff{opt:CPcomple} is a linear conic optimization problem with the CP tensor cone.
It aims to find a CP completion of $\mc{C}$ such that the sum of
{all} unknown entries is minimum. This question can be solved by
Algorithm~\ref{alg:linconc:CP}. If $w^* = (w_{\alpha}^*)_{\alpha\in \mc{I}}$
is a minimizer of \reff{opt:CPcomple}, then
$
\mc{C}_0 + \sum_{\alpha\in\mc{I}} w_{\alpha}^* \mc{E}_{\alpha}
$
is a CP tensor completion for $\mc{C}$.

\begin{example}\cite[Example 4.2]{ZhouFan14}
Consider the partially given matrix
\[ C = \bbm
* & 4 & 1 & 2 & 2\\
4 & * & 0 & 1 & 3\\
1 & 0 & * & 1 & 2\\
2 & 1 & 1 & * & 1\\
2 & 3 & 2 & 1 & *
\ebm, \]
where the symbol $*$ denotes the unknown entries. The label set
\[ \mc{I} = \{ (1,1),\, (2,2),\, (3,3),\, (4,4),\, (5,5) \}.
\]
We apply Algorithm \ref{alg:linconc:CP} to compute the CP matrix completion of $C$.
The algorithm terminates at the initial loop $k=1$ within $0.19$ second.
The optimal value and the optimal solution of \reff{opt:CPcomple} are
\[
F_{\mbox{min}} = 18.0039,\quad
w^* = (4.9100,\,6.0209,\,2.0774,\,1.0595,\,3.9360).
\]
The output CP matrix completion is
\[
\begin{bmatrix*}[l]
4.9100 & 4 & 1 & 2 & 2\\
4 & 6.0209 & 0 & 1 & 3\\
1 & 0 & 2.0774 & 1 & 2\\
2 & 1 & 1 & 1.0595 & 1\\
2 & 3 & 2 & 1 & 3.9360
\end{bmatrix*}
\]
which can be decomposed as follows
\[
9.9793\bbm 0.0000\\ 0.0000\\ 0.4046\\ 0.1008\\ 0.4946\ebm^{\otimes 2}+
9.3740\bbm 0.4902\\ 0.0000\\ 0.2176\\ 0.2907\\ 0.0015\ebm^{\otimes 2}+
32.6505\bbm 0.2853\\ 0.4294\\ 0.0000\\0.0713\\ 0.2140\ebm^{\otimes 2}.
\]
\end{example}

\begin{example}
Consider the partially given tensor $\mc{A}\in\ttS^3(\re^4)$
\[ A_{:,:,1} = \left[\begin{array}{cccc}
   *  &  3  &  6  &  5\\
   3  &  3  &  *  &  3\\
   6  &  *  &  6  &  4\\
   5  &  3  &  4  &  5\end{array}\right],\quad
   A_{:,:,2} = \left[\begin{array}{cccc}
   3  &  3  &  *  &  3\\
   3  &  *  &  5  &  5\\
   *  &  5  &  5  &  4\\
   3  &  5  &  4  &  5\end{array}\right],\]
\[ A_{:,:,3} = \left[\begin{array}{cccc}
   6  &  *  &  6  &  4\\
   *  &  5  &  5  &  4\\
   6  &  5  &  *  &  7\\
   4  &  4  &  7  &  7\end{array}\right],\quad
   A_{:,:,4} = \left[\begin{array}{cccc}
   5  &  3  &  4  &  5\\
   3  &  5  &  4  &  5\\
   4  &  4  &  7  &  7\\
   5  &  5  &  7  &  *\end{array}\right].\]
In the above, the symbol $*$ denotes the unknown entries, and each $\mc{A}_{:,:,i}$
denotes the subtensor of $\mc{A}$ with labels $(i_1,i_2,i)$ for all $1\le i_1,\, i_2\le 4$.
The label set of unknown entries is
\[ \mc{I} = \{(1,1,1),\ (1,2,3),\ (2,2,2),\ (3,3,3),\ (4,4,4)\}.
\]
We apply Algorithm \ref{alg:linconc:CP} to compute the CP tensor completion of $\mc{A}$.
The algorithm terminates at the loop $k=3$ within $0.20$ second.
The optimal value and the optimal solution of \reff{opt:CPcomple} are
\[
F_{\mbox{min}} = 40.7663,\quad
w^* = (7.9006,\,1.3225,\,6.8038,\,9.9248,\,8.2023).
\]
The output CP tensor completion can be expressed as
\[\begin{array}{c}
5.4917\bbm 0.0000  \\ 0.0000  \\  0.4085 \\ 0.5915\ebm^{\otimes 3}+
59.8411\bbm  0.0000\\ 0.3584\\ 0.3664\\0.2752\ebm^{\otimes 3}+
5.9400\bbm 0.0000\\0.5492\\0.4508\\0.0000\ebm^{\otimes 3}+\\
111.6693\bbm 0.2150\\0.2213\\0.2412\\0.3225\ebm^{\otimes 3}+
59.1666\bbm 0.3573\\0.0000\\0.3653\\0.2774\ebm^{\otimes 3}+
35.4546\bbm 0.3763\\0.3692\\0.0000\\0.2545\ebm^{\otimes 3}+\\
15.1997\bbm 0.5280\\0.0000\\0.4720\\0.0000\ebm^{\otimes 3}.
\end{array}
\]
\end{example}

\section{Conclusions and Discussions}

This paper proposes a dehomogenization approach for
studying completely positive tensors.
We give a hierarchy of Moment-SOS relaxations for approximating CP tensor cones,
based the dehomogenization. This helps us to get a Moment-SOS algorithm
(i.e., Algorithm~\ref{sdpalg:CPnd})
for checking memberships of CP tensor cones.
Moreover, we also give an algorithm for solving linear conic optimization
with CP tensor cones, using Moment-SOS relaxations based on dehomogenization.
The dehomogenization approach is more efficient than the traditional one.
Numerical experiments are given to show the efficiency of the proposed approach.

We would like to remark that the dehomogenization approach can also be used to check the memberships of copositive tensors and the related linear coinc optimization problems.
It is worthy to note that a linear conic optimization problem
with the copositive tensor cone is usually solved together with its dual,
which is a linear conic optimization problem with the CP tensor cone.
This can be seen as in \reff{org:min<cy>:CPnd}-\reff{org:max<ap>:COPnd}.
Therefore, the dehomogenization Algorithm~\ref{alg:linconc:CP} 
can also be used to solve linear conic optimization problems 
with the copositive tensor cone.

For a symmetric tensor $\mc{B}\in\ttS^d(\re^n)$, 
let $f = \mc{B}(x)$ be as in \reff{df:A(x)}.
The tensor $\mc{B}$ is copositive if and only if the form $f$ is copositive.
That is, $f(x)\ge 0$ for all $x\in\Dt$.
Recall the dehomogenization map $\varpi$ as in \reff{diamond:x:f}.
It holds that
\[
\mc{B}\in\mc{COP}_n^{\otimes d} \quad  \Leftrightarrow \quad 
f\in\mc{COP}_{n,d} \quad  \Leftrightarrow \quad 
\varpi(f)\in\mathscr{P}_d(\overline{\Dt}).
\]
Therefore, approximations for the positive polynomial cone 
$\mathscr{P}_d(\overline{\Dt})$ 
can be used to approximate copositive cones 
$\mc{COP}_d^{\otimes d}$ and $\mc{COP}_{n,d}$.
Consider the polynomial optimization
\begin{equation}
\label{eq:copmem}
\left\{\begin{array}{rl}
f_0 = \min & \varpi(f)(\bar{x})\\
\st & g(\bar{x})\ge 0.
\end{array}
\right.
\end{equation}
In the above,
the $g(\bar{x})\,\coloneqq\,(x_1,\,x_2,\dots,\, x_{\bar{n}},\, 1-e^T\bar{x})$.
It is clear that $\varpi(f)\in \mathscr{P}_d(\overline{\Dt})$ 
if and only if the optimal value $f_0\ge 0$.
By \cite{Nie19tight}, one can construct a tight hierarchy of Moment-SOS relaxations 
for solving \reff{eq:copmem}, using optimality conditions 
and Lagrange multiplier expressions.
Denote the polynomial tuples 
\[
\lmd = \bbm 
\nabla (\varpi(f))-\bar{x}^T\nabla (\varpi(f))\cdot e, \\
 -\bar{x}^T\nabla (\varpi(f)) 
 \ebm,\quad
h = \bbm \lmd_1 g_1 \\ \vdots \\ \lmd_n g_n \ebm,
\]
where $\nabla$ denotes the gradient with respect to $\bar{x}$,
and $g_i$ (resp., $\lmd_i$) denotes the $i$th entry 
of the polynomial tuple $g$ (resp., $\lmd$).
The $k$th order tight relaxation for \reff{eq:copmem} is
\begin{equation}
\label{eq:dhcop}
\left\{
\begin{array}{rl}
f_k = \max & \gamma\\
\st & \varpi(f)-\gamma\in \qmod{g,\lmd}_{2k}
+ \ideal{h}_{2k}.
\end{array}
\right.
\end{equation}
A sufficient condition for $\mc{B}$ to be copositive is that $f_k\ge 0$ 
for some relaxation order $k$.
We refer to \cite{Nie19tight,Nie18cop} for more details 
about the construction of tight relaxations.
For instance, consider the Horn matrix (see \cite{Horn})
\[
H = \left[\begin{array}{rrrrr}
1 & -1 & 1 & 1 & -1\\
-1 & 1 & -1 & 1 & 1\\
1 & -1 & 1 & -1 & 1\\
1 & 1 & -1 & 1 & -1\\
-1 & 1 & 1 & -1 & 1
\end{array}\right],
\]
which is known to be copositive.
It corresponds to the form
\[
f(x) = x^Tx+2x_1(x_3+x_4-x_2-x_5)+2x_2(x_4+x_5-x_3)+2x_3(x_5-x_4)-2x_4x_5.
\]
After dehomogenization, we get
\[
\varpi(f) = 1-4x_1-4x_4+4x_1^2+4x_1x_3+8x_1x_4-4x_2x_3+4x_2x_4+4x_4^2.
\]
For the hierarchy of relaxations~\reff{eq:dhcop}, 
when $k=2$, we get $f_2 \approx -1.4681\cdot 10^{-7}$.

\medskip \noindent
{\bf Acknowledgement}
The authors are partially supported by NSF grant DMS-2110780.

\end{document}